\newtheorem{theorem}{Theorem}[section]
\newtheorem{example}[theorem]{Example}
\newtheorem{lemma}[theorem]{Lemma}
\newtheorem{corollary}[theorem]{Corollary}
\newtheorem{remark}[theorem]{Remark}
\newtheorem{proposition}[theorem]{Proposition}
\theoremstyle{definition}
\newtheorem{definition}[theorem]{Definition}
\begin{document}
\title{Graded algebras with homogeneous involution and varieties of almost polynomial growth}

\author{Wesley Quaresma Cota$^*$}
\address{Department of Mathematics, Instituto de Matem\'atica, Estat\'istica e Ci\^encia da Computa\c c\~ao, Universidade de S\~ao Paulo, SP, Brazil}
\email{quaresmawesley@gmail.com}

\author{Felipe Yasumura}
\address{Department of Mathematics, Instituto de Matem\'atica, Estat\'istica e Ci\^encia da Computa\c c\~ao, Universidade de S\~ao Paulo, SP, Brazil}
\email{fyyasumura@ime.usp.br}
\thanks{{\it E-mail addresses:} quaresmawesley@gmail.com (Cota), fyyasumura@ime.usp.br (Yasumura)}
\thanks{\footnotesize W. Q. C.~is partially supported by FAPESP, grant no.~2025/05699-0.}
\thanks{F. Y.~is supported by FAPESP, grant no.~2024/14914-9.}
\thanks{$^{*}$ Corresponding author}

\subjclass[2020]{Primary 16R10, 16R50, Secondary  16W10, 16W50}

\keywords{Graded algebra, homogeneous involution, codimension growth}

\dedicatory{Instituto de Matemática, Estatística e Ciência da Computação, Universidade de São Paulo, São Paulo,
Brazil}

\begin{abstract}  
An important aspect in the theory of algebras with polynomial identities is the study of the asymptotic behavior of the  codimension sequence $c_n(A),\, n\geq 1,$ which measures the growth of polynomial identities of a given algebra $A$. In this context, graded identities naturally arise as prominent tools, since ordinary polynomial identities can be viewed as a particular case of graded identities. Moreover, as an involution does not necessarily preserve the homogeneous components of a grading, it is natural to consider the notion of a homogeneous involution. In this work, we investigate the behavior of the codimension sequence in the setting of $G$-graded algebras endowed with a homogeneous involution. More specifically, we characterize the varieties of polynomial growth in terms of the exclusion of a list of algebras from the variety. As a consequence, we provide the classification of the varieties with almost polynomial growth in this setting.
\end{abstract}

\maketitle

\section{Introduction}

The systematic study of group graded algebras was initiated in \cite{PZ89}, where the authors were interested in the classification of group gradings on simple Lie algebras and their applications. Addressing this problem requires, in particular, the classification of degree-preserving involutions on simple associative algebras graded by a group \cite{BSZ} (see also \cite{EK13} and the references therein). Since then, several works have been devoted to the study of polynomial identities of graded algebras equipped with a degree-preserving involution.

On the other hand, in many natural situations where an algebra is endowed with a group grading and an involution, the involution acts by inverting degrees. Properties of such degree-inverting involutions have been examined, for example, in \cite{Hazrat}. Furthermore, when studying polynomial identities of algebras equipped with a group grading and a compatible involution, the degree-inverting case often arises as the most natural setting (see, for instance, \cite{Mello1}).

Motivated by these examples, T.~de Mello \cite{Mello} introduced the notion of a homogeneous involution, that is, an involution on a graded algebra that maps each homogeneous component to another (possibly different) homogeneous component. This framework provides a unified treatment of degree-preserving, degree-inverting, and intermediate types of involutions. Moreover, it is particularly well suited for the study of polynomial identities of graded algebras endowed with a compatible involution. The works \cite{MY,Y} further develop this line of investigation.

In another direction, the study of the growth of polynomial identities of a PI-algebra $A$ was introduced by Regev. The author proposed an approach based on analyzing the sequence of dimensions of the spaces $P_n$ of multilinear polynomials of degree $n$ modulo the corresponding multilinear identities, known as the {codimension sequence} $c_n(A),\, {n \in \mathbb{N}}$. 
Since, in characteristic zero, every polynomial identity of $A$ follows from its multilinear ones, this sequence effectively measures the growth of the polynomial identities of $A$.

In his seminal work~\cite{RG}, Regev proved that the codimension sequence of any PI-algebra is exponentially bounded. Consequently, one can consider the limit $$\textnormal{exp}(A)=\lim_{n \to \infty} \sqrt[n]{c_n(A)},$$ called the PI-exponent of $A$. A groundbreaking result of Giambruno and Zaicev~\cite{GiZai} showed that this limit always exists and is a non-negative integer, thus providing an affirmative answer to a question posed by Amitsur.

The notion of varieties of almost polynomial growth is introduced, where the variety has exponential growth and any proper subvariety has polynomial growth. In terms of the exponent, Kemer \cite{Kem} proved that $\textnormal{exp}(A)\geq 2$ if and only if the variety generated by $A$ contains either the Grassmann algebra $\mathcal{G}$ or the algebra $UT_2$ of upper triangular matrices of size $2$. As a consequence, $\mathcal{G}$ and $UT_2$ generate the only varieties of almost polynomial growth. Moreover, the codimension sequence of a PI-algebra either has polynomial growth or exponential growth, with no intermediate behavior.

It is natural to address analogous questions in the context of graded algebras and algebras with involution. In this direction, Valenti~\cite{Valenti} classified the varieties of $G$-graded algebras with almost polynomial growth. Subsequently, Giambruno and Mischenko~\cite{GM} obtained an analogous classification in the setting of algebras with involution. More recently, the authors in~\cite{Lorena} investigated this problem for algebras endowed with a degree-preserving involution.

In this paper, we investigate $G$-graded algebras endowed with a homogeneous involution. We characterize the varieties of polynomial growth by describing them through the exclusion of certain algebras, where the corresponding list depends on the type of involution considered: degree-preserving, degree-inverting, or arbitrary. As a consequence, we obtain a classification of the varieties of almost polynomial growth in this setting. Moreover, we conclude that the codimension sequence of a graded algebra with a homogeneous involution is either polynomially bounded or grows exponentially. 

It is worth mentioning that our results extend those previously established for algebras with involution and for $G$-graded algebras with a degree-preserving involution.

\section{Algebras with Homogeneous Involution}

Let $G$ be a finite group (not necessarily abelian) and $A$ an associative algebra over a field $F$ of characteristic zero.

Recall that a linear map $*:A\rightarrow A$ is called an involution if 
\[
(ab)^{*}=b^{*}a^{*}\quad\text{and}\quad (a^{*})^{*}=a,
\]
for all $a,b\in A$. An algebra $A$ endowed with an involution $*$ is called an {algebra with involution} or a $*$-algebra.

For a $*$-algebra $A$, we define $c_n^*(A)$, $n \geq 1$, as the $n$-th $*$-codimension of $A$, which measures the dimension of the space $P_n^*$ of multilinear $*$-polynomials modulo the $T_*$-ideal $\textnormal{Id}^*(A)$ of polynomial identities of $A$.

\begin{example}
    Consider the subalgebra $
M = F(e_{11} + e_{44}) + F(e_{22} + e_{33}) + Fe_{12} + Fe_{34}$ of the algebra $UT_4(F)$ of $4\times 4$ upper triangular matrices. Denote by $M_\rho$ the algebra $M$ equipped with the reflection involution $\rho$, which is defined by reflecting a matrix along its secondary diagonal, that is,
\[
\left(
\begin{array}{cccc}
a & b & 0 & 0 \\
0 & c & 0 & 0 \\
0 & 0 & c & d \\
0 & 0 & 0 & a
\end{array}
\right)^{\rho}
=
\left(
\begin{array}{cccc}
a & d & 0 & 0 \\
0 & c & 0 & 0 \\
0 & 0 & c & b \\
0 & 0 & 0 & a
\end{array}
\right).
\]
\end{example}

Given a group $G$, we say that $A$ is a {$G$-graded algebra} if there exist subspaces $A_{g}$, $g\in G$, called the {homogeneous components of degree $g$}, such that 
\[
A=\bigoplus_{g\in G}A_{g}\quad\text{and}\quad A_{g}A_{h}\subseteq A_{gh}, \mbox{ for all }g,h\in G.
\]

The homogeneous degree of a nonzero element $a\in A_g$ is defined as $\mbox{deg }a=g$.

\begin{example} 
 Let $p$ be a prime number dividing $|G|$ and let $g \in G$ be an element of order $p$.  
Denote by $C_p = \langle g \rangle$ the cyclic subgroup generated by $g$ and consider the group algebra $F C_p$ over the field $F$. The canonical grading on $FC_p$ is given by $(FC_p)_{g^i}= Fg^i$, for all $1\leq i
\leq p$. 
\end{example}

Similarly, for a $G$-graded algebra $A$, we associate to it the sequence $c_n^G(A)$, $n \geq 1$, of $G$-graded codimensions, which measure the dimension of the space $P_n^G$ of multilinear $G$-graded polynomials modulo the $T_G$-ideal $\textnormal{Id}^G(A)$ of $G$-graded polynomial identities of $A$.

The {support} of the $G$-grading on $A$, denoted by $\operatorname{supp}(A)$, is the set of all $g\in G$ such that $A_{g}\neq \{0\}$.

\begin{definition}
Let $A=\bigoplus_{g\in G}A_{g}$ be a $G$-graded algebra endowed with an involution $*$ and let $\tau:\operatorname{supp}(A)\rightarrow \operatorname{supp}(A)$ be a bijection. The involution $*$ is said to be {homogeneous with respect to} $\tau$ (or simply a {$\tau$-involution}) if  $(A_{g})^{*}\subseteq A_{\tau(g)}$, for all $g\in G$.
\end{definition}

If $*$ is a $\tau$-homogeneous involution, then $\tau(\tau(g))=g$ and $\tau(gh)=\tau(h)\tau(g)$ for all $g$, $h\in\operatorname{supp}(A)$ such that $A_gA_h\ne0$. It does not necessarily imply that $\tau$ can be extended to an involution of the group $G$. However, if we consider the free $G$-graded algebra, then the support of the grading is $G=\operatorname{supp}(A)$; i.e., the existence the free $(G,*)$-algebra (see below) implies that $\tau$ is an involution. Thus, we shall assume that $\tau$ is an involution of the group from now on.

As a particular case of a homogeneous involution, if $*$ is homogeneous with respect to the map $\tau(g)=g^{-1}$ for all $g\in \operatorname{supp}(A)$, then $*$ is called a degree-inverting involution. On the other hand, if $(A_{g})^{*}\subseteq A_{g}$ for all $g\in G$, we say that $*$ is a degree-preserving involution or a graded involution, i.e., an involution with respect to the trivial map $\tau(g)=g$. Note that, in this case, whenever $A_gA_h\neq 0$, we must have $gh=hg$, which imposes a restriction on the subset $\operatorname{supp}(A)$. Consequently, since the above definition constrains the support of the grading, it is natural to adopt a more general notion of homogeneous involution, allowing us to remove this limitation.

We refer to a $G$-graded algebra endowed with a homogeneous involution simply as a {$(G,*)$-algebra}, specifying in the context whether the involution is graded, degree-inverting, or of a general homogeneous type.

For instance, consider $FC_{2,*}$ as the algebra $FC_2$ with trivial grading and involution $(\alpha1 +\beta g)^*=\alpha1 -\beta g$.

\begin{example}  Let $g\in G$ be an element of prime order $p$ such that $\tau(g)\in\{g,g^{-1}\}$ and $C_p=
\langle g \rangle$. Define:

\begin{enumerate}
%\item[$(1)$] $FC_{2,*}$ is the algebra $FC_2$ with trivial grading and involution $(\alpha1 +\beta g)^*=\alpha1 -\beta g$.
\item[$(1)$] $FC_{2,*}^{g}$ is the algebra $FC_2$ with canonical grading and involution $(\alpha1 +\beta g)^*=\alpha1 -\beta g$. 
   \item[$(2)$]  $FC_{p,\tau}^{g}$ is the algebra $FC_p$ with canonical grading and $\tau$-involution $g^*=\tau(g)$.
\end{enumerate}
\end{example}

\begin{example}
    Let $g\in G$ and denote by $M_{\rho, \tau}^{g}$ the $(G,*)$-algebra $M$ defined above endowed with reflection involution and $G$-grading given by
$$(M_{\rho,\tau }^{g})_{1}=F(e_{11}+e_{44})+F(e_{22}+e_{33}),\quad (M_{\rho,\tau}^{g})_{g} =F e_{12}$$ $$(M_{\rho, \tau }^{g})_{\tau(g)} =Fe_{34}\quad  \mbox{ and }\quad (M_{\rho,\tau}^{g})_{r} = \{0\}, \mbox{ for all }r\in G- \{1,g,\tau(g)\}.$$
\end{example}

If $A$ and $B$ are $(G,*)$-algebras, a homomorphism $\varphi: A \to B$ is called a {homomorphism of graded algebras with homogeneous involution} (or simply a {$(G,*)$-homomorphism}) if $\varphi(A_g)\subseteq B_g$ for all $g\in G$ and $\varphi(a^*)=\varphi(a)^*$ for all $a\in A$. In addition, if $\varphi$ is bijective, then we say that $\varphi$ is an isomorphism and $A$ and $B$ are isomorphic as $(G,*)$-algebras.

For each $g\in G$, consider the countable sets of noncommuting variables 
\[
X_g=\{x_{k,g}\mid k\in \mathbb{N}\}, \qquad X_g^*=\{x_{k,g}^*\mid k\in \mathbb{N}\}.
\]
Let $X=\bigcup_{g\in G}X_g$, $X^*=\bigcup_{g\in G}X_g^*$, and denote by $\mathcal{F}=F\langle X\cup X^*\rangle$ the free associative algebra generated by $X\cup X^*$. We endow $\mathcal{F}$ with a natural involution
defined by
\[
(x_{i,g_i})^*=x_{i,g_i}^*, \qquad (x_{i,g_i}^*)^*=x_{i,g_i}, \qquad (x_{i,g_i}^{\epsilon_i}x_{j,g_j}^{\epsilon_j})^*=(x_{j,g_j}^{\epsilon_j})^*(x_{i,g_i}^{\epsilon_i})^*,
\]
where $\epsilon_k$ denotes either $*$ or the empty symbol.

Now define a $G$-grading on $\mathcal{F}$ so that it becomes a $G$-graded algebra endowed with a homogeneous involution with respect to a map $\tau:G\to G$. Set $\deg(1)=1$, and for each $g\in G$ and $k\in \mathbb{N}$ define $\deg(x_{k,g})=g$ and $\deg(x_{k,g}^*)=\tau(g)$. For a monomial $y=x_{i_1,g_1}^{\epsilon_1}\cdots x_{i_n,g_n}^{\epsilon_n}\in \mathcal{F}$, we define 
\[
\deg(y)=\deg(x_{i_1,g_1}^{\epsilon_1})\cdots \deg(x_{i_n,g_n}^{\epsilon_n}).
\]
Then, setting
\[
\mathcal{F}_g=\operatorname{span}_F\{y=x_{i_1,g_{i_1}}^{\epsilon_1}\cdots x_{i_n,g_{i_n}}^{\epsilon_n}\mid \deg(y)=g\},
\]
we obtain $\mathcal{F}=\bigoplus_{g\in G}\mathcal{F}_g$, a $G$-grading that makes $\mathcal{F}$ a $G$-graded algebra with a homogeneous involution relative to $\tau$. We call this algebra the {free associative $G$-graded algebra with a homogeneous involution with respect to} $\tau$.

A polynomial $f\in\mathcal{F}$ is said to be a {$(G,*)$-identity} of $A$ if it vanishes under any admissible substitution of variables by elements of $A$, that is, when each $x_{k,g}$ is replaced by an element $a\in A_g$ and the corresponding $x_{k,g}^*$ by $a^*$, for all $k\in\mathbb{N}$. In this case, we write $f\equiv 0$ on $A$.

A two-sided ideal $I\subseteq\mathcal{F}$ is called a {$T_{G}^*$-ideal} if it is invariant under all $(G,*)$-endomorphisms of $\mathcal{F}$. In general, we are interested in studying the set of all $(G,*)$-identities of a given algebra $A$, denoted by
\[
\operatorname{Id}^{\sharp}(A)=\{f\in\mathcal{F}\mid f\equiv 0 \text{ on }A\}.
\]
It is straightforward to verify that $\operatorname{Id}^{\sharp}(A)$ is a $T_G^*$-ideal of $\mathcal{F}$. Hence, we refer to it as the $T_G^*$-ideal of $A$. The {$(G,*)$-variety} generated by $A$, denoted by $\operatorname{var}^{\sharp}(A)$, is the class of all $(G,*)$-algebras $B$ satisfying $\operatorname{Id}^{\sharp}(A)\subseteq \operatorname{Id}^{\sharp}(B)$. We say that $A$ and $B$ are $T_G^*$-equivalent if $\textnormal{var}^\sharp(A)=\textnormal{var}^\sharp(B)$.

We denote by 
\[
P_n^{\sharp } = \operatorname{span}_F\{x_{\sigma(1),g_{1}}^{\epsilon_1}\cdots x_{\sigma(n),g_{n}}^{\epsilon_n}\mid \sigma\in S_n,\, g_{1},\ldots,g_n\in G,\, \epsilon_1,\ldots,\epsilon_n\in\{*,\emptyset\}\}
\]
the space of multilinear $(G,*)$-polynomials of degree $n$. As in the classical case, when $\operatorname{char} F = 0$, $\operatorname{Id}^\sharp(A)$ is generated by its multilinear $(G,*)$-identities.

In this paper, we are concerned with the growth of the $(G,*)$-identities of $A$. For that purpose, we consider the quotient space
\[
P_n^{\sharp}(A)=\frac{P_n^{\sharp}}{P_n^{\sharp}\cap \operatorname{Id}^{\sharp}(A)}
\]
and define $c_n^{\sharp }(A)=\dim_F P_n^{\
\sharp}(A)$ as the $n$th $(G,*)$-codimension of $A$. We define $c_n^\sharp(\mathcal{V})=c_n^\sharp(A)$, where $A$ is a $(G,*)$-algebra satisfying $\mathcal{V}=\textnormal{var}^\sharp(A)$.

If $A$ is a $(G,*)$-algebra, one may study its ordinary identities, $*$-identities, and $G$-graded identities. Moreover, the spaces $P_n, P_n^* $ and $P_n^G$ can be viewed as subspaces of $P_n^\sharp$. The connection between the corresponding codimensions is established in the next lemma, whose proof follows directly from known results in the literature (see \cite{GReg}).

\begin{proposition}\label{desigualdades}
  For a $(G,*)$-algebra $A$ we have
$$c_n(A)
    \leq \,c_n^*(A),\, c_n^G(A)\,
    \leq c_n^\sharp(A)\leq 2^n|G|^nc_n(A).$$
  
\end{proposition}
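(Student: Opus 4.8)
The plan is to establish the chain of inequalities
\[
c_n(A) \leq c_n^*(A),\, c_n^G(A) \leq c_n^\sharp(A) \leq 2^n|G|^n c_n(A)
\]
by exploiting the inclusions of the relevant spaces of multilinear polynomials, which were already noted in the excerpt: the spaces $P_n$, $P_n^*$, and $P_n^G$ all sit inside $P_n^\sharp$ in a natural way. Concretely, I would first fix compatible presentations so that each specialized space arises by restricting the variables available in $P_n^\sharp$. For the ordinary case, $P_n$ is recovered by taking all variables to be homogeneous of trivial degree $1$ and carrying no $*$; for the $*$-case, $P_n^*$ is recovered by allowing both $x_i$ and $x_i^*$ but fixing every grading label to be $1$; for the graded case, $P_n^G$ is recovered by allowing all degrees $g\in G$ but forbidding the $*$-symbol.

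For the left-hand inequalities $c_n(A)\leq c_n^*(A)$ and $c_n(A)\leq c_n^G(A)$, the key step is to observe that each $(G,*)$-identity that is actually an ordinary (respectively $*$- or $G$-graded) polynomial restricts to an identity in the smaller setting, while conversely a multilinear polynomial in the smaller alphabet that is \emph{not} an identity there remains non-identity in the richer alphabet. More precisely, the natural inclusion $P_n \hookrightarrow P_n^\sharp$ carries $P_n\cap \operatorname{Id}(A)$ into $P_n^\sharp\cap\operatorname{Id}^\sharp(A)$, and I would check that it induces an injection on the quotients $P_n(A)\hookrightarrow P_n^\sharp(A)$; passing to dimensions gives the inequality. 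The same argument, with $P_n$ replaced by $P_n^*$ or $P_n^G$, yields the remaining two lower bounds. The only point requiring care is the compatibility of the substitution conventions: an admissible substitution in the $(G,*)$-setting must restrict to an admissible substitution in each sub-theory, which follows because setting degrees to $1$ and/or ignoring the involution variables is a legitimate specialization.

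For the middle inequalities $c_n^*(A)\leq c_n^\sharp(A)$ and $c_n^G(A)\leq c_n^\sharp(A)$, I would argue identically: $P_n^*$ and $P_n^G$ inject into $P_n^\sharp$ in a way that sends the corresponding relatively free identities into $\operatorname{Id}^\sharp(A)$, so the induced maps on quotients are injective and the codimensions compare as claimed. Thus all four of the first inequalities in the chain come from a single general principle, namely that enlarging the structure (adding a grading, adding an involution, or both) can only increase the codimension sequence, since more distinct variables and more constraints on substitutions generate at least as many linearly independent non-identities.

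The substantive step is the upper bound $c_n^\sharp(A)\leq 2^n|G|^n c_n(A)$. Here the idea is that every multilinear $(G,*)$-monomial in $P_n^\sharp$ is obtained from an ordinary multilinear monomial by decorating each of its $n$ variables with two independent choices: a group element $g\in G$ recording the homogeneous degree (giving the factor $|G|^n$) and a symbol $\epsilon\in\{*,\emptyset\}$ recording whether the involution is applied (giving the factor $2^n$). I would make this precise by defining, for each choice of labels $(g_1,\dots,g_n)\in G^n$ and signs $(\epsilon_1,\dots,\epsilon_n)\in\{*,\emptyset\}^n$, a linear map from $P_n$ into $P_n^\sharp$ that attaches the prescribed degree and sign to the $i$th variable; summing over all $2^n|G|^n$ such choices produces a spanning set of $P_n^\sharp$ built from copies of $P_n$. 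The map must be compatible with the identities, so that the image of $P_n\cap\operatorname{Id}(A)$ lands in $\operatorname{Id}^\sharp(A)$, whence each copy contributes at most $\dim P_n(A)=c_n(A)$ to $\dim P_n^\sharp(A)$; adding up over all label-sign choices gives the bound. The main obstacle I anticipate is verifying this last compatibility cleanly: one must confirm that specializing the $(G,*)$-variables so as to reproduce an ordinary substitution is admissible and that an ordinary identity, once decorated, genuinely vanishes on $A$ under all admissible $(G,*)$-substitutions. This is precisely the content cited from \cite{GReg}, so in the write-up I would either invoke that reference directly or reproduce the short decoration argument for the reader's convenience.
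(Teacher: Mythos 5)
Your upper bound $c_n^\sharp(A)\le 2^n|G|^n c_n(A)$ is argued correctly (the decoration maps $x_i\mapsto x_{i,g_i}^{\epsilon_i}$ carry ordinary identities into $(G,*)$-identities because every decorated substitution is in particular an ordinary substitution, and the $2^n|G|^n$ copies of $P_n$ span $P_n^\sharp$), and the same is true of the two inequalities that use genuinely non-starred inclusions: $c_n(A)\le c_n^*(A)$ (ordinary variables inside $*$-variables range over all of $A$ in both theories) and $c_n^G(A)\le c_n^\sharp(A)$ (graded variables without $*$ admit exactly the same substitutions in both theories). Note that the paper itself offers no proof, only a citation to \cite{GReg}, so you are supplying an argument the authors omitted.

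However, there is a genuine gap in the remaining two inequalities, $c_n(A)\le c_n^G(A)$ and $c_n^*(A)\le c_n^\sharp(A)$, where your identification forces all grading labels to be $1$. Under that embedding, a polynomial in the variables $x_{i,1}$ (or $x_{i,1}^{\epsilon_i}$) is a $(G,*)$-identity of $A$ as soon as it vanishes under substitutions from the component $A_1$ alone, which is much weaker than vanishing on all of $A$; so the map you claim is injective on quotients need not be. Concretely, take $A=M_2(F)$ with the Pauli $\mathbb{Z}_2\times\mathbb{Z}_2$-grading (each component spanned by one Pauli matrix) and the transpose involution, which is a graded involution. Then $A_1=F\cdot I$ is central, so $[x_{1,1},x_{2,1}]\in\operatorname{Id}^\sharp(A)$, while $[x_1,x_2]\notin\operatorname{Id}(A)$; the class of $[x_1,x_2]$ is nonzero in $P_2(A)$ but dies in $P_2^\sharp(A)$ under your map. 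What your embedding actually computes is $c_n(A_1)$ (resp.\ $c_n^*(A_1)$), and the subalgebra comparison $c_n(A_1)\le c_n(A)$ goes the wrong way for your purposes. The correct embedding is the generic one: send $x_i\mapsto \sum_{g\in G}x_{i,g}$ and $x_i^*\mapsto \sum_{g\in G}x_{i,g}^*$. Since every element of $A$ is the sum of its homogeneous components, a substitution witnessing that $f$ is not an identity in the smaller theory translates into an admissible homogeneous substitution witnessing that the image of $f$ is not a $(G,*)$-identity; hence $f$ is an ordinary (resp.\ $*$-) identity if and only if its image is a $(G,*)$-identity, and the induced map on quotients is injective. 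With this replacement your chain of inequalities goes through; this generic-substitution device is precisely the content of the result in \cite{GReg} that the paper invokes.
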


An important consequence of the previous result and \cite{RG} concerning the sequence of $(G,*)$-codimensions is stated below.

\begin{proposition} \label{exponentiallubounded}
    Let $A$ be a $(G,*)$-algebra satisfying an ordinary non-trivial polynomial identity. Then the $(G,*)$-codimension sequence $c_n^\sharp(A)$, $n = 1,2, \ldots$, is exponentially bounded.
\end{proposition}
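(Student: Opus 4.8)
The plan is to reduce everything to Regev's classical exponential bound for ordinary codimensions, combined with the right-hand inequality already recorded in Proposition~\ref{desigualdades}. The starting observation is that the hypothesis---that $A$ satisfies some non-trivial ordinary polynomial identity---is exactly the statement that $A$ is a PI-algebra in the usual sense. Once this is noted, the whole argument becomes a matter of chaining two known estimates, so no new structural input about the grading or the involution is required.

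First I would invoke Regev's theorem~\cite{RG}: since $A$ is a PI-algebra, its ordinary codimension sequence is exponentially bounded, so there exist constants $a>0$ and $b>0$ (depending only on the degree of the identity that $A$ satisfies) such that $c_n(A)\le a\,b^n$ for all $n\ge 1$. Next I would feed this into the upper bound of Proposition~\ref{desigualdades}, namely $c_n^\sharp(A)\le 2^n|G|^n c_n(A)$. Because $G$ is assumed finite throughout this section, $|G|$ is a fixed positive integer, and combining the two inequalities yields
$$c_n^\sharp(A)\le 2^n|G|^n\,a\,b^n = a\,(2|G|b)^n,$$
which is precisely an exponential bound on $c_n^\sharp(A)$, completing the argument.

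As for obstacles, there is essentially no serious difficulty here, since the genuinely nontrivial input---the inequality $c_n^\sharp(A)\le 2^n|G|^nc_n(A)$, which quantifies how much the grading together with the involution can inflate the codimension---has already been established in Proposition~\ref{desigualdades}, and Regev's exponential bound is simply cited. The only points requiring a moment's care are, first, recognizing that a non-trivial \emph{ordinary} identity is all that is needed to apply Regev (the homogeneous involution and the grading play no role in this reduction), and second, using the finiteness of $G$ so that the factor $|G|^n$ contributes only to the base of the exponential rather than being unbounded.
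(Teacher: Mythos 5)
Your proposal is correct and follows exactly the route the paper intends: the paper states this proposition as "an important consequence of the previous result and \cite{RG}," i.e., of the inequality $c_n^\sharp(A)\le 2^n|G|^n c_n(A)$ from Proposition~\ref{desigualdades} combined with Regev's exponential bound on ordinary codimensions. Your chaining of these two estimates, using finiteness of $G$ to absorb $|G|^n$ into the base of the exponential, is precisely the argument.
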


In particular, the previous result allows us to conclude the following remark.

\begin{remark} \label{exponentialgrowth}
According to \cite{GM2, MV, Valenti}, the $(G,*)$-varieties generated by $FC_{2,*}$, $M_{\rho,\tau}^{g}$ and $FC_{p,\tau}^{h}$ have exponential growth of their sequences of $(G,*)$-codimensions, for all $g,h\in G$ with $|h|=p$, where $p$ is a prime number dividing $|G|$, $\tau(h)\in \{h,h^{-1}\}$ and $C_p=\langle h\rangle$. Moreover, if $|G|$ is even and $s\in G$ with $|s|=2$ and $\tau(s)=s$, then $FC_{2,*}^s$ has exponential growth.
\end{remark}

Before concluding this section, we discuss some aspects of the Wedderburn–Malcev Theorem in the context of $(G,*)$-algebras, which provides a structural description of finite-dimensional algebras. To this end, we first recall the classification of finite-dimensional $G$-graded algebras obtained in \cite{BaSehZa,Bahturin}.

\begin{lemma}
    \label{simplesgraded}
    Let $A$ be a finite-dimensional graded-simple $G$-graded algebra over an
algebraically closed field $F$. Then, $A\cong F^{\sigma}[H]\otimes M_n(F)$ where $H$ is a finite subgroup of $G$ and $\sigma$ is a $2$-cocycle of $H$. The grading is given by an $n$-tuple $\alpha= (\alpha_1, \ldots, \alpha_n)\in G^n$ such that $\mbox{deg }(e_{g}\otimes e_{ij})=\alpha_i^{-1}g\alpha_j.$
\end{lemma}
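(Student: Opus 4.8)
The plan is to follow the standard route for classifying finite-dimensional graded-simple algebras: first reduce to the graded-semisimple case, then to a single graded matrix algebra over a graded-division algebra, and finally identify that graded-division algebra as a twisted group algebra. Throughout I exploit that $F$ is algebraically closed and $\operatorname{char}F=0$.

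First, I would show that $A$ is semisimple. Consider the graded Jacobson radical $J_{gr}(A)$, the largest graded nilpotent two-sided ideal of $A$. It is by construction a graded ideal, and since it is nilpotent we have $J_{gr}(A)\subsetneq A$; graded-simplicity then forces $J_{gr}(A)=0$. In characteristic zero one checks that the ordinary Jacobson radical $J(A)$ is itself a graded ideal (the radical of a finite-dimensional algebra is stable under the automorphisms induced by the finite grading group, hence homogeneous), so $J(A)=J_{gr}(A)=0$ and $A$ is semisimple as an ungraded algebra. Next, applying the graded Wedderburn--Artin theorem, a graded-semisimple finite-dimensional algebra is a direct sum of graded-simple graded ideals, each isomorphic to a graded matrix algebra $M_n(D)$ over a \emph{graded-division algebra} $D$ (one in which every nonzero homogeneous element is invertible). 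As $A$ is graded-simple, only one summand occurs, giving $A\cong M_n(D)$.

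The heart of the argument is the classification of $D$. Let $H=\operatorname{supp}(D)$. Since the product of two nonzero homogeneous elements is again invertible, hence nonzero, and the inverse of a homogeneous element is homogeneous, $H$ is a finite subgroup of $G$. The identity component $D_e$ is a finite-dimensional division algebra over the algebraically closed field $F$, so $D_e=F$; and for each $g\in H$, left multiplication by a fixed invertible element of $D_g$ identifies $D_g$ with $D_e$, so $\dim_F D_g=1$. Choosing a nonzero $e_g\in D_g$ for every $g\in H$ yields relations $e_g e_h=\sigma(g,h)\,e_{gh}$ with $\sigma(g,h)\in F^{\times}$, and associativity of $D$ translates exactly into the $2$-cocycle condition on $\sigma$. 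Hence $D\cong F^{\sigma}[H]$ and $A\cong M_n(F^{\sigma}[H])\cong F^{\sigma}[H]\otimes M_n(F)$.

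Finally I would recover the grading. The $G$-grading on $M_n(D)$ restricts to an elementary shift on the matrix units compatible with the grading of $D$: there is an $n$-tuple $\alpha=(\alpha_1,\ldots,\alpha_n)\in G^n$ for which $e_{ij}$ is homogeneous of degree $\alpha_i^{-1}\alpha_j$, so that the homogeneous basis element $e_g\otimes e_{ij}$ has degree $\alpha_i^{-1}g\alpha_j$, as asserted. I expect the main obstacle to be the semisimplicity reduction, namely arguing rigorously that the Jacobson radical is a graded ideal so that graded-simplicity annihilates it; this is precisely where the characteristic-zero hypothesis is used. Once semisimplicity is secured, the passage to $M_n(D)$ and the identification $D\cong F^{\sigma}[H]$ are essentially formal, relying only on algebraic closedness and the invertibility of homogeneous elements.
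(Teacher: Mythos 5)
The paper does not actually prove this lemma: it is recalled as the known classification of finite-dimensional graded-simple algebras, cited from \cite{BaSehZa,Bahturin}. Your argument is, in outline, exactly the proof in those sources: graded Wedderburn--Artin theory reduces $A$ to $M_n(D)$ with $D$ a graded-division algebra; algebraic closedness gives $D_e=F$ and one-dimensional homogeneous components supported on a subgroup $H$; associativity turns the structure constants into a $2$-cocycle, so $D\cong F^{\sigma}[H]$; and a homogeneous basis of the underlying graded module produces the elementary shift $\alpha$. So the core of your proposal (the second, third and fourth steps) is correct and is essentially the cited proof.

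Two criticisms, one of them substantive. Substantive: your semisimplicity reduction is both flawed as justified and unnecessary. The justification --- that ``the radical is stable under the automorphisms induced by the finite grading group'' --- only makes sense when $G$ is abelian, in which case a grading over an algebraically closed field of characteristic zero is the common eigenspace decomposition of an action of the character group $\widehat{G}$; the paper explicitly allows nonabelian $G$, and a grading by a nonabelian group is \emph{not} the eigenspace decomposition of any group of automorphisms, so this argument is unavailable (one must instead invoke Gordienko's co-stability result, exactly as the paper does in the proof of Proposition \ref{wedderburnmalcev}(1)). But the step can simply be deleted: as you yourself observe in passing, graded-simplicity already forces $J_{\mathrm{gr}}(A)=0$, and the graded Wedderburn--Artin theorem applies directly to graded-simple graded-Artinian algebras, with no reference to the ungraded radical. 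Consequently your closing remark, that characteristic zero is ``precisely where'' the hypothesis is needed, is wrong: the lemma holds over any algebraically closed field, in any characteristic (e.g.\ $F[\mathbb{Z}_p]$ in characteristic $p$ is graded-simple, not semisimple, and is still of the stated form with $n=1$). Minor: for nonabelian $G$ you cannot deduce $\deg(e_g\otimes e_{ij})=\alpha_i^{-1}g\alpha_j$ by multiplying the degree of $e_g\otimes 1$ by that of $1\otimes e_{ij}$ --- that would give $g\alpha_i^{-1}\alpha_j$, and in fact $e_g\otimes 1$ is not even homogeneous in this grading. The formula must be read off from a homogeneous $D$-basis $v_1,\ldots,v_n$ of the graded module $V$: the endomorphism sending $v_j\mapsto v_i d$ with $d\in D_g$ is homogeneous of degree $\alpha_i^{-1}g\alpha_j$ under the convention $\alpha_k=(\deg v_k)^{-1}$.
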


Recall that a finite-dimensional $(G,*)$-algebra $A$ is said to be $(G,*)$-simple if $A^2 \neq \{0\}$ and $A$ contains no nonzero proper $(G,*)$-ideals, that is, $G$-graded ideals invariant under the involution.

\begin{proposition} \label{wedderburnmalcev} Let $G$ be a finite group and $A$ be a finite-dimensional $(G,*)$-algebra over a field of characteristic zero. Then:

\begin{enumerate}
    \item[(1)] $J(A)$ is a $(G, *)$-ideal;
    \item[(2)]  If $A$ is a $(G, *)$-simple algebra, then either $A$ is graded-simple or $A=B\oplus B^*$, where $B$ is a graded-simple subalgebra of $A$;
    \item[(3)] $A=S+J(A)$, where $S=B_1\oplus \cdots \oplus B_m$ is a maximal semisimple $(G,*)$-subalgebra of $A$ and $B_1, \ldots, B_m$ are $(G,*)$-simple algebras.
\end{enumerate}
\end{proposition}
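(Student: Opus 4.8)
The plan is to prove the three items in order, with (1) serving as the foundation, (2) as a self-contained decomposition of $(G,*)$-simple algebras via graded ideals, and (3) as the genuine Wedderburn--Malcev lifting, where the real difficulty lies.

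For (1), I would verify separately that $J(A)$ is graded and that it is $*$-invariant. Gradedness is the known fact that, for a finite group $G$ and $\operatorname{char} F = 0$, the Jacobson radical of a finite-dimensional $G$-graded algebra is a graded ideal \cite{BaSehZa,Bahturin}. For $*$-invariance, note that $*$ is an anti-automorphism of $A$, hence an isomorphism $A \to A^{\mathrm{op}}$; since the Jacobson radical is preserved by isomorphisms and $J(A^{\mathrm{op}}) = J(A)$ as subsets of $A$, one gets $J(A)^* = J(A)$. Being both graded and $*$-invariant, $J(A)$ is a $(G,*)$-ideal.

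For (2), suppose $A$ is $(G,*)$-simple but not graded-simple, and pick a minimal nonzero graded ideal $B$ (necessarily proper, as $A$ is not graded-simple). Since $*$ is an anti-automorphism mapping each homogeneous component into another, $B^*$ is again a graded ideal, and both $B + B^*$ and $B \cap B^*$ are $(G,*)$-ideals. $(G,*)$-simplicity forces $B + B^* = A$, while minimality of $B$ gives $B \cap B^* \in \{0, B\}$; the possibility $B \cap B^* = B$ would make $B$ itself $*$-invariant, hence a proper nonzero $(G,*)$-ideal, which is impossible. Thus $B \cap B^* = 0$ and $A = B \oplus B^*$. From this, together with the fact that $B$ and $B^*$ are two-sided ideals, one obtains $BB^* \subseteq B \cap B^* = 0$ and likewise $B^*B = 0$; consequently every graded ideal of $B$ is a graded ideal of $A$, so minimality leaves $B$ with no proper nonzero graded ideals, and $A^2 = B^2 \oplus (B^*)^2 \neq 0$ forces $B^2 \neq 0$. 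Hence $B$ is graded-simple.

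For (3), by (1) the quotient $\bar{A} = A/J(A)$ is a semisimple $(G,*)$-algebra. Invoking the graded Wedderburn--Malcev theorem I would first obtain a graded semisimple complement $S_0$, so that $A = S_0 \oplus J(A)$. Applying $*$ produces a second graded complement $S_0^*$, and by the conjugacy part of the graded theorem there is $a \in 1 + J(A)$ with $S_0^* = a S_0 a^{-1}$. The goal is then to correct $S_0$ to a conjugate $S$ that is simultaneously graded and $*$-invariant. Once such an $S$ is found, I would split it into graded-simple components $S = C_1 \oplus \cdots \oplus C_k$; the involution permutes these components, and grouping each self-paired $C_i$ together with swapped pairs $C_i \oplus C_i^*$ yields, via (2), a decomposition $S = B_1 \oplus \cdots \oplus B_m$ into $(G,*)$-simple algebras. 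Maximality of $S$ is then immediate, since any semisimple $(G,*)$-subalgebra meets the nil ideal $J(A)$ trivially and therefore has dimension at most $\dim S$.

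The main obstacle is precisely the equivariant lifting in (3): manufacturing a complement that is at once $G$-graded and $*$-stable. The conjugating element $a$ from the graded theorem need not satisfy $S^* = S$ for $S = a S_0 a^{-1}$, so it must be adjusted. I would resolve this through the standard averaging argument, available because $\operatorname{char} F = 0$ and $*$ has order two: the graded Levi complements form a torsor under a unipotent group $U$ on which the group $\{1,*\}$ acts, and the vanishing of $H^1(\mathbb{Z}/2, U)$ in characteristic zero guarantees a $*$-fixed point. Checking that this fixed complement is genuinely graded and $*$-invariant, and that the action is set up correctly, is the step demanding the most care.
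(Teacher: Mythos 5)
Your proposal is correct, and it diverges from the paper's proof in two places. For (1) both arguments coincide: gradedness of $J(A)$ is quoted from the literature (the paper cites Gordienko's co-stability result rather than \cite{BaSehZa,Bahturin}, a citation quibble only), plus the fact that an anti-automorphism, being an isomorphism $A \to A^{\mathrm{op}}$, preserves the radical. For (2) the paper first applies (1) to conclude $J(A)=0$, writes the graded semisimple algebra as a direct sum $B_1 \oplus \cdots \oplus B_s$ of graded-simple ideals, and notes that $*$ permutes the summands, so $B_1^* = B_j$; your version works directly with a minimal graded ideal $B$ and the $(G,*)$-ideals $B+B^*$ and $B \cap B^*$, together with $BB^* = B^*B = 0$, which reaches the same dichotomy without invoking the structure theory of graded semisimple algebras --- slightly more elementary, same substance. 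The genuine difference is (3). The paper's proof takes a graded Wedderburn--Malcev decomposition $A = S + J(A)$ with $S = C_1 \oplus \cdots \oplus C_s$ a sum of graded-simple algebras and then asserts that each $C_i^*$ is again among $C_1, \ldots, C_s$; this tacitly assumes the graded complement satisfies $S^* = S$, which the graded Wedderburn--Malcev theorem alone does not provide, and the paper offers no justification for it. Your approach attacks exactly this point: $S_0^*$ is a second graded complement (homogeneity of $*$ and $\tau(1)=1$ are what make it graded), the graded conjugacy theorem makes the set of graded complements a torsor under the unipotent group $U = 1 + J(A)_1$, the involution acts on $U$ through the automorphism $u \mapsto (u^*)^{-1}$ (the inverse is forced because $*$ reverses products), and the vanishing of $H^1(\mathbb{Z}/2, U)$ --- via the filtration by the subgroups $1 + (J^k)_1$, whose abelian quotients are uniquely divisible in characteristic zero --- yields a $*$-fixed graded complement. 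This is the standard equivariant-lifting argument, in the spirit of Taft's invariant Wedderburn factors, and it supplies precisely the step the paper's proof of (3) leaves implicit; the remainder of your (3) (splitting $S$ into graded-simple components, grouping them into $(G,*)$-simple blocks via (2), and the dimension count giving maximality) matches the paper.
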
 
\begin{proof}
(1) Clearly $J(A)$ is a $\ast$-ideal. In addition, by Gordienko's result \cite[Corollary 3.3]{Gord}, it is a graded ideal.

(2) From (1), $J(A)=0$. Thus, $A=B_1\oplus\cdots\oplus B_s$ is a direct sum of graded-simple algebras and is unital. Then, $B_1^*$ is a graded ideal; so $B_1^*=B_j$, for some $j\in\{1,\ldots,s\}$. If $j=1$, then $B_1$ is a $(G,*)$-ideal, thus $A=B_1$ is $(G,*)$-simple. Otherwise, $B_1\oplus B_j$ is a $(G,*)$-ideal, i.e., $A=B_1\oplus B_1^*$.

(3) From (1), $J(A)$ is a graded ideal. Since $\mathrm{char}\,F=0$, there exists a graded Wedderburn-Malcev decomposition $A=S+J(A)$, where $S=C_1\oplus\cdots\oplus C_s$ is a direct sum of graded-simple algebras. From (2), for each $C_i$, either it is $(G,*)$-simple or $C_i\oplus C_i^*$ is $(G,*)$-simple, and $C_i^*$ is among the algebras $C_1$, \dots, $C_s$.
\end{proof}

In the following, we classify all homogeneous involution on $FC_p$ with the canonical grading.

\begin{proposition} \label{Homogeneousinvolutiononfcp}
    Let $g\in G$ be an element of prime order $p$ and $A=FC_p$ the group algebra of $C_p=\langle g\rangle$ endowed with the canonical $G$-grading. Then, $*$ is a homogeneous $\tau$-involution on $FC_p$ if and only if $\tau(g)\in\{g,g^{-1}\}$ and $(FC_p,*)\cong FC^{g}_{p,\tau}$ or $(FC_p,*)\cong FC^{g}_{2,*}$.
\end{proposition}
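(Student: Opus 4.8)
The plan is to reduce everything to the behavior of the single homogeneous generator $g^*$, exploiting the fact that $FC_p$ is \emph{commutative}. First I would note that commutativity turns the anti-automorphism condition into an automorphism condition: since $(ab)^*=b^*a^*=a^*b^*$, the map $*$ is an $F$-algebra endomorphism, and as $*^2=\mathrm{id}$ it is an involutive automorphism, so in particular $1^*=1$ (equivalently, $1\in(FC_p)_1$ and $\tau(1)=1$ force $1^*$ to be a scalar multiple of $1$, which the relations pin to $1$). Homogeneity then gives $g^*\in(FC_p)_{\tau(g)}$; since $\tau$ permutes the support $C_p$ and fixes $1$, I may write $\tau(g)=g^k$ and hence $g^*=\mu g^k$ for some $\mu\in F^\times$ (nonzero because $*$ is injective and $Fg^k$ is one-dimensional).

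Next I would extract the arithmetic constraints. Because $*$ is a homomorphism, $(g^i)^*=(g^*)^i$; evaluating at $i=p$ and using $g^p=1=1^*$ yields $\mu^p g^{kp}=1$, i.e.\ $\mu^p=1$. Imposing the involution law $(g^*)^*=g$ gives $(\mu g^k)^*=\mu(g^*)^k=\mu^{k+1}g^{k^2}=g$, whence simultaneously $k^2\equiv 1\pmod p$ and $\mu^{k+1}=1$. Since $p$ is prime, $k\equiv\pm 1\pmod p$, which is exactly the assertion $\tau(g)\in\{g,g^{-1}\}$. In the degree-preserving case $k\equiv 1$, the relation $\mu^2=1$ leaves $\mu=\pm1$; combined with $\mu^p=1$ this gives either $\mu=1$ (so $g^*=g$, yielding $FC^g_{p,\tau}$) or $\mu=-1$, which can occur only for $p=2$ and produces $g^*=-g$, i.e.\ $FC^g_{2,*}$.

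The remaining case $k\equiv-1$ (forcing $p$ odd) is the delicate point, and I expect it to be the main obstacle: here $\mu^{k+1}=\mu^p=1$ is automatic, so a priori \emph{every} $p$-th root of unity $\mu\in F$ yields an admissible involution $g^*=\mu g^{-1}$, and I must show all of these are $(G,*)$-isomorphic to the normalized one with $\mu=1$. For this I would seek a grading-preserving algebra automorphism $\varphi$ with $\varphi(g)=cg$; such a $\varphi$ is a homomorphism precisely when $c^p=1$, and a direct computation shows it intertwines $g^*=\mu g^{-1}$ with $g^*=g^{-1}$ exactly when $c^2=\mu$. Since $p$ is odd, squaring is a bijection on the $p$-th roots of unity in $F$: choosing $a$ with $2a\equiv 1\pmod p$ and setting $c=\mu^a$ gives $c\in F$ with $c^2=\mu$ and $c^p=1$, so $\varphi$ is the desired $(G,*)$-isomorphism onto $FC^g_{p,\tau}$. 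Finally, the converse is routine, as one checks directly that the defining prescriptions of $FC^g_{p,\tau}$ and $FC^g_{2,*}$ do extend to homogeneous $\tau$-involutions using the relations already verified.
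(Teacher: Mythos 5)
Your proof is correct and follows essentially the same route as the paper's: write $g^*=\mu g^k$, use $*^2=\mathrm{id}$ and $g^p=1$ to force $k\equiv\pm1\pmod p$ and $\mu^p=1$, split into the degree-preserving and degree-inverting cases, and in the latter normalize $\mu$ away by a graded automorphism $g\mapsto cg$ with $c^2=\mu$ and $c^p=1$. Your explicit choice $c=\mu^a$ with $2a\equiv1\pmod p$ is a small refinement: it justifies the existence inside $F$ of the scalar that the paper merely asserts (its $\beta$ with $\beta^p=1$, $\beta^2=\alpha$), so no passage to the algebraic closure is needed.
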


\begin{proof}
  Since $C_p = \langle g \rangle$ and $*$ is a homogeneous $\tau$-involution on $A=FC_p$ with canonical grading, we have $g^* = \alpha g^i$ for some integer $i$ and some scalar $\alpha \in F$. Applying the involution, we obtain
\[
g = (g^*)^* = (\alpha g^i)^* = \alpha^{\,i+1} g^{i^2}.
\]
Hence $i^2 \equiv 1 \pmod{p}$ and consequently $i = \pm 1$. Since $\deg g^*=\tau(\deg g)$, we get $\tau(g)\in\{g,g^{-1}\}$. Moreover,
\[
1 = (g^*)^p = (\alpha g^i)^p = \alpha^p g^{ip} = \alpha^p1,
\]
which shows that $\alpha$ is a $p$-th root of unity. Thus, if $p=2$, then we get $(FC_2,*)\cong FC_{2,*}^{g}$ in case $g^*=-g$ or $(FC_p,*)\cong FC_{2,\tau}^{g}$ in case $g^*=g$. So, assume that $p>2$.

Let $g^* = \alpha g$. Then
\[
g = (g^*)^* =  \alpha^2 g.
\]
Hence $\alpha^2 = 1$, and consequently $\alpha \in \{1, -1\}$. Since $p$ is odd and $\alpha$ is a primitive $p$-th root of unity, it follows that $\alpha = 1$. Therefore, $g^* = g$ and $A \cong FC_{p,\tau}^{g}$ where $\tau(g)=g$ in this case.

Now, assume that $g^* = \alpha g^{-1}$ and let $\beta$ be such that $\beta^p=1$ and $\beta^2=\alpha$. The graded automorphism given by $\varphi: FC_p\to  FC_p$ determined by $\varphi(g)= \beta^{-1} g$ defines an isomorphism of $(G,*)$-algebras $(FC_p,*)\cong FC_{p,\tau}^{g}$, where $\tau(g)=g^{-1}$. 
\end{proof}

The previous proposition asserts that every homogeneous involution on $F C_p$ is either a degree-preserving involution or a degree-inverting involution. However, some care is required when dealing with $\tau$-involutions. Indeed, if $\tau$ is the identity map, then the algebra $F C_{p,\tau }^{g}$ where $g^*=g^{-1}$, for $|g| > 2$, is not well-defined. A similar phenomenon arises if $\tau$ is the inversion map and one considers the algebra $F C_{p,\tau}^{g}$ with $g^*=g$. Therefore, it is essential to specify the context in which we are operating.

\section{Classifying Varieties of Almost Polynomial Growth}

In this section we classify, up to $T_G^*$-equivalence, the finite-dimensional $(G,*)$-algebras of almost polynomial growth. Once again, we assume that $G$ is a finite group, $F$ is a field of characteristic zero and $\tau$ is an involution on $G$.

\begin{remark}\label{obs}
    Let $A$ be a $(G,*)$-algebra over a field $F$ of characteristic zero and let $\overline{F}$ be the algebraic closure of $F$. Then $A\otimes \overline{F}$ has a structure of $(G,*)$-algebra where $(a\otimes \alpha)^*=a^*\otimes \alpha $ and $(A\otimes \overline{F})_g= A_g\otimes \overline{F}$, for all $a\in A, \alpha \in 
    \overline{F}$ and $g\in G$. Moreover, $\textnormal{Id}^\sharp(A)=\textnormal{Id}^\sharp (A\otimes \overline{F})$ both over $F$ and
    $c_n^
\sharp(A)$ over $F$ is equal to $c_n^\sharp (A\otimes \overline{F})$ computed over $\overline{F}$.

In addition, the algebras $FC_{2,*}$, $FC_{2,*}^g$, $FC_{p,\tau}^g$, and $M_{\rho,\tau}^g$ are defined over the prime field of $F$. In particular,
\[
FC_{2,*}\otimes_F \overline{F} \cong \overline{F}C_{2,*}, \qquad
FC_{p,\tau}^g \otimes_F \overline{F} \cong \overline{F}C_{p,\tau}^g, \qquad
FC_{2,*}^g \otimes_F \overline{F} \cong \overline{F}C_{2,*}^g,
\]
and
\[
M_{\rho,\tau}^g(F)\otimes_F \overline{F} \cong M_{\rho,\tau}^g(\overline{F}).
\]

Furthermore, since $F$ is an infinite field, the condition $B \notin \mathrm{var}_F^\sharp(A)$ implies that $\overline{B} \notin \mathrm{var}_{\overline{F}}^\sharp(\overline{A})$ (note that this might fail if $F$ is finite). Here the subscript $E$ (where $E = F$ or $E = \overline{F}$) indicates that we are considering the variety of $E$-algebras. Indeed, assume that $\overline{B}$ belongs to the variety of $\overline{F}$-algebras generated by $\overline{A}$, and let $f$ be an $F$-polynomial identity of $A$. Since $F$ is infinite, $f$ is also a polynomial identity of $\overline{A}$. By hypothesis it follows that $f$ is an identity of $\overline{B}$. As $B \subseteq \overline{B}$, we conclude that $f$ annihilates $B$ as well. Thus $B$ lies in the variety of $F$-algebras generated by $A$.

In summary, given an infinite field $F$ and $F$-algebras $A$ and $B$, where $B$ is defined over the prime field of $F$, $B \notin \mathrm{var}_F^\sharp(A)$ implies $\overline{B} \notin \mathrm{var}_{\overline{F}}^\sharp(\overline{A})$. Moreover, $c_n^\sharp(A) = c_n^\sharp(\overline{A})$, where the right-hand side is computed considering $\overline{A}$ as an algebra over $\overline{F}$.

\end{remark}

According to the previous remark, from now on we assume that $F$ is an algebraically closed field.

\begin{lemma} \label{subalgebraofA}
    Let $1<H\leq G$ and $A=F^\sigma H$ be a $(G,*)$-algebra with $\tau$-involution. Then $A$ contains a $(G,*)$-subalgebra isomorphic to $FC_{p,\tau}^{h}$ or $FC_{2,*}^h$, for some $h\in H$ of prime order $p$ generating $C_p$ and $\tau(h)\in \{h,h^{-1}\}$.
\end{lemma}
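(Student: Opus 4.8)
The plan is to first reduce the statement to a purely group-theoretic question about the restriction of $\tau$ to $H$, settle that question, and then identify the relevant one-variable subalgebra using Proposition~\ref{Homogeneousinvolutiononfcp}.

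Since $A=F^\sigma H$ is a twisted group algebra, it has a basis $\{u_h : h\in H\}$ with $u_hu_k=\sigma(h,k)u_{hk}$, and each homogeneous component $A_h=Fu_h$ is one-dimensional (the grading being $\deg u_h=h$). Because $\sigma$ never vanishes, $A_hA_k\neq 0$ for all $h,k\in H$, so the relation $\tau(hk)=\tau(k)\tau(h)$ noted after the definition of a $\tau$-involution holds for all $h,k\in H$; that is, $\tau$ restricts to an involutory anti-automorphism of $H$. I will set $\phi(h):=\tau(h)^{-1}$; a direct check shows $\phi$ is an involutory \emph{automorphism} of $H$, and that $\tau(h)\in\{h,h^{-1}\}$ if and only if $\phi(h)\in\{h,h^{-1}\}$ (with $\phi(h)=h \Leftrightarrow \tau(h)=h^{-1}$ and $\phi(h)=h^{-1}\Leftrightarrow \tau(h)=h$). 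Since $*$ is $\tau$-homogeneous and $A_h$ is one-dimensional, I also record that $u_h^*=\lambda_h u_{\tau(h)}$ for suitable scalars $\lambda_h\in F$.

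Next I would produce the required element $h$ by a dichotomy on the fixed subgroup $H^\phi$. If $H^\phi\neq 1$, then by Cauchy's theorem $H^\phi$ contains an element $h$ of prime order $p$, and $\phi(h)=h$ gives $\tau(h)=h^{-1}\in\{h,h^{-1}\}$. If instead $H^\phi=1$, then $\phi$ is a fixed-point-free automorphism of order $2$; the classical fact that the map $h\mapsto h^{-1}\phi(h)$ is then a bijection of $H$ forces $\phi$ to be the inversion map and $H$ to be abelian, so any prime-order element $h\in H$ (again by Cauchy, as $|H|>1$) satisfies $\phi(h)=h^{-1}$, i.e.\ $\tau(h)=h\in\{h,h^{-1}\}$. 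In every case I obtain $h\in H$ of prime order $p$ with $\tau(h)\in\{h,h^{-1}\}$, and $C_p=\langle h\rangle\leq H$. I expect this group-theoretic step---specifically invoking the fixed-point-free involution theorem---to be the main conceptual obstacle; the remainder is bookkeeping.

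Finally I would exhibit the subalgebra. Put $B=\operatorname{span}_F\{u_{h^i} : 0\leq i<p\}$. This is graded, since each $\deg u_{h^i}=h^i\in G$, and it is closed under $*$: indeed $u_h^*\in Fu_{\tau(h)}\subseteq B$ because $\tau(h)\in C_p$, and since $*$ is an anti-automorphism every $u_{h^i}^*$ is a scalar multiple of some $u_{\tau(h)^j}\in B$; hence $B$ is a $(G,*)$-subalgebra. As $B=F^\sigma C_p$ is the twisted group algebra of the cyclic group $C_p$ and $F$ is algebraically closed of characteristic zero, the cocycle is a coboundary (equivalently $H^2(C_p,F^\times)=0$), so after rescaling the basis elements $B$ becomes, as a $G$-graded algebra, the group algebra $FC_p$ with its canonical grading, carrying the transported homogeneous $\tau$-involution. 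Proposition~\ref{Homogeneousinvolutiononfcp} then classifies this involution, yielding $B\cong FC_{p,\tau}^{h}$ or $B\cong FC_{2,*}^{h}$, which is exactly the desired $(G,*)$-subalgebra of $A$.
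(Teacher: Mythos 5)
Your proposal is correct, but the key group-theoretic step is handled quite differently from the paper. Where you pass to the automorphism $\phi(h)=\tau(h)^{-1}$, split on whether the fixed subgroup $H^\phi$ is trivial, and in the fixed-point-free case invoke the classical (Burnside) theorem that a finite group with a fixed-point-free involutory automorphism is abelian with $\phi$ equal to inversion, the paper's argument is completely elementary: take any nontrivial $h\in H$; if $h\tau(h)=1$ then $\tau(h)=h^{-1}$ and a suitable power of $h$ has prime order with $\tau(g)=g^{-1}$; otherwise $x:=h\tau(h)$ is a nontrivial element with $\tau(x)=x$ (since $\tau$ is an anti-automorphism of order at most two), and again a suitable power of $x$ has prime order and is $\tau$-fixed. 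So the paper never needs the fixed-point-free theorem---the element $h\tau(h)$ produces the required $\tau$-fixed element directly---while your route buys slightly more structural information (in your second case, $H$ is forced to be abelian with $\tau$ acting as the identity on it) at the cost of citing a nontrivial classical result. The concluding step is essentially the same in both proofs: the span of the homogeneous components indexed by $\langle h\rangle$ is a graded $*$-invariant subalgebra, the restricted cocycle is trivialized (you make explicit the fact $H^2(C_p,F^\times)=1$ over the algebraically closed field $F$, which the paper leaves implicit in writing $FC_p\subseteq F^\sigma H$), and Proposition~\ref{Homogeneousinvolutiononfcp} identifies the resulting $(G,*)$-algebra as $FC_{p,\tau}^{h}$ or $FC_{2,*}^{h}$.
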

\begin{proof}
From Proposition~\ref{Homogeneousinvolutiononfcp}, it is enough to find a $*$-invariant graded subalgebra $FC_p$. Let $h \in H$ be any nontrivial element. If $h\tau(h)=1$, then necessarily $\tau(h)=h^{-1}$. Write $|h|=pm$, where $p$ is 
a prime number. In this case, $g := h^{m}$ has order $p$ and satisfies $\tau(g)\in\{g, g^{-1}\}$. Hence, if $C_p=\langle g\rangle$, the algebra 
$FC_p \subseteq F^\sigma H$ is a $(G,*)$-subalgebra isomorphic to either $FC_{p,\tau}^g$ or $FC_{2,*}^g$. This proves the claim.

On the other hand, if $h\tau(h) \ne 1$, then 
$x := h\tau(h)\in H$ is a nontrivial element such that $\tau(x) = x$. In this case, 
the proof proceeds analogously to the previous paragraph.
\end{proof}

% \begin{proof}
%    Let $g \in H$ be an element of prime order $p$ and consider $FC_p$ the $G$-graded subalgebra of $A$, where $C_p = \langle g \rangle$. If $\tau(g) = g^{-1}$, it follows that $(FC_p)^* = FC_p$, and thus $*$ is a $\tau$-involution on $FC_p$. By Proposition~\ref{Homogeneousinvolutiononfcp}, we have that $FC_{p,\tau }^{g}$ or $FC_{2,*}^g$ is a $(G,*)$-subalgebra of $A$.

% If $\tau(g)\neq  g^{-1}$, we consider $x = g\tau(g)$ a non-trivial element of $H$. 
% Since $\tau$ is an involution on $G$, we have 
% \[
% \tau(x) = \tau(g\tau(g)) = \tau^2(g)\tau(g) = g\tau(g) = x.
% \]
% Hence, $x$ is fixed by $\tau$. If the order of $x$ is a prime number $p$, let $C_p' = \langle x \rangle$. 
% By the previous argument, $FC_p'$ is a $(G,*)$-subalgebra of $A$ with $\tau$-involution and so the proof follows by Proposition~\ref{Homogeneousinvolutiononfcp}.

% Otherwise, $|x| = pm$ where $p$ is a prime number dividing $|H|$ and $m > 1$. Consider the cyclic subgroup $C_p'' = \langle x^{m} \rangle$ of order $p$. 
% Note that 
% \[
% \tau(x^m) = (\tau(x))^m = x^m,
% \] and so, by Proposition~\ref{Homogeneousinvolutiononfcp},  $FC_p''$ is the desired $(G,*)$-subalgebra of $F^\sigma H$.
% \end{proof}

Before we present the next result, let us define the following set of $(G,*)$-algebras:\[
\mathcal{I}= \{\, FC_{2,*},\, M_{\rho,\tau}^{g},\, FC_{p,\tau}^{h}
\mid g,h\in G,\; p \text{ prime dividing } |G|,\; |h|=p,\; 
\tau(h)\in \{h,h^{-1}\} \,\},
\]
Moreover, if $|G|$ is even, we also add to $\mathcal{I}$ the $(G,*)$-algebras $FC_{2,*}^s$, for all $s\in G$ with $|s|=2$ and $\tau(s)=s$.

\begin{lemma}\label{starsimples}
Let $\mathcal{V}$ be a variety of $(G,*)$-algebras. If $Q\notin \mathcal{V}$, for all $Q\in \mathcal{I}$, then any $(G,*)$-simple algebra in $\mathcal{V}$ is isomorphic to $F$.
\end{lemma}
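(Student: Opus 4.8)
The plan is to argue by contraposition: supposing $A\in\mathcal V$ is $(G,*)$-simple with $A\not\cong F$, I will produce a member of $\mathcal I$ inside $\operatorname{var}^\sharp(A)$. Since a $(G,*)$-subalgebra $Q\subseteq A$ automatically satisfies $\operatorname{Id}^\sharp(A)\subseteq\operatorname{Id}^\sharp(Q)$, hence $Q\in\operatorname{var}^\sharp(A)\subseteq\mathcal V$, it suffices in most cases to exhibit such a $Q$ as a $(G,*)$-subalgebra. The first step is Proposition~\ref{wedderburnmalcev}(2): a $(G,*)$-simple algebra is either graded-simple or of the form $A=B\oplus B^*$ with $B$ graded-simple and $BB^*=B^*B=0$.

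The case $A=B\oplus B^*$ is immediate. Put $e=1_B$; as the unit of the graded ideal $B$ it is a homogeneous idempotent of degree $1$, and $e^*=1_{B^*}$ is a degree-$1$ idempotent orthogonal to $e$. Thus $Fe\oplus Fe^*$ is a $*$-invariant graded subalgebra isomorphic to $F\times F$ with the exchange involution and trivial grading, that is, to $FC_{2,*}\in\mathcal I$. So from now on $A$ is graded-simple, and by Lemma~\ref{simplesgraded} we may write $A\cong F^\sigma H\otimes M_n(F)$. If $n=1$ and $H=1$, then $\dim_F A=1$ and $A\cong F$ (the only involution on a one-dimensional algebra being the identity), which is the conclusion; if $n=1$ and $H\neq1$, Lemma~\ref{subalgebraofA} supplies a $(G,*)$-subalgebra isomorphic to $FC_{p,\tau}^h$ or $FC_{2,*}^h$, both in $\mathcal I$.

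It remains to treat $n\geq2$, which is the heart of the matter. Here I would examine the homogeneous involution through its action on the degree-$1$ idempotents $d_i=1_{F^\sigma H}\otimes e_{ii}$ (with $\sum_i d_i=1$) and on the homogeneous matrix units, using the normal form of a homogeneous involution on a graded-simple algebra (cf.~\cite{MY,Y}). If $*$ moves some idempotent, two patterns arise: when it interchanges two of them, $d_i^*=d_j$ with $i\neq j$, then $Fd_i\oplus Fd_j\cong FC_{2,*}$; when it pairs them in the nested fashion of the reflection (as in the defining embedding of $M$ into $M_4$), one obtains symmetric idempotents $f_1,f_2$ and a homogeneous $u$ of some degree $g$ with $u^2=0$ and $v:=u^*$ satisfying $f_1uf_2=u$, $f_2vf_1=v$ and $uv=vu=0$, so that $\langle f_1,f_2,u,v\rangle\cong M_{\rho,\tau}^{g}\in\mathcal I$. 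If instead $*$ fixes every $d_i$ and $F^\sigma H\neq F$, then applying Lemma~\ref{subalgebraofA} to the symmetric corner $d_iAd_i\cong F^\sigma H$ yields $FC_{p,\tau}^h$.

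The genuine obstacle is the residual case: $*$ fixes every diagonal idempotent (transpose type) and $F^\sigma H=F$, so that $A=M_n(F)$ carries an elementary grading with $e_{ij}^*=e_{ji}$. When some degree $g=\alpha_i^{-1}\alpha_j$ has order greater than $2$, the element $e_{12}-e_{21}$ is no longer homogeneous, so the device producing $FC_{2,*}$ in the ordinary case breaks down; moreover a diagonal-fixing involution sends a homogeneous matrix unit $u$ back to its own indices, forcing $uu^*\neq0$, so that no copy of $M_{\rho,\tau}^g$ (in which the two nilpotents multiply to zero) occurs as a $(G,*)$-subalgebra. One must instead realize $M_{\rho,\tau}^g$ inside $M_2(F)\times M_2(F)$ with the exchange-transpose involution $(X,Y)^*=(Y^t,X^t)$, through $f_1=(e_{11},e_{11})$, $f_2=(e_{22},e_{22})$, $u=(e_{12},0)$ and $v=(0,e_{21})$, and then prove that this exchange-type $(G,*)$-algebra belongs to $\operatorname{var}^\sharp(A)$ --- or, equivalently, reduce the surviving graded $2\times2$ block with involution to the ordinary and degree-preserving cases already classified in \cite{GM,Lorena,Valenti}. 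This reduction, resting on the explicit classification of homogeneous involutions on $F^\sigma H\otimes M_n(F)$, is where the main effort lies.
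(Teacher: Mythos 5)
Your treatment of the exchange case $A=B\oplus B^*$ (yielding $FC_{2,*}$) and of the case $n=1$ (via Lemma~\ref{subalgebraofA}) coincides with the paper's proof. The genuine gap is the case $n\geq 2$, which you correctly call ``the heart of the matter'' but do not actually prove: your plan is to hunt for explicit $(G,*)$-subalgebras isomorphic to members of $\mathcal{I}$, and in the residual case you yourself isolate --- $A=M_n(F)$ with an elementary grading and a transpose-type involution fixing the diagonal idempotents --- you concede that no member of $\mathcal{I}$ embeds as a $(G,*)$-subalgebra and that the needed reduction ``is where the main effort lies.'' That reduction is precisely what a proof must supply; as written, the argument for $n\geq 2$ is a program, not a proof. (Your intermediate dichotomy for moved idempotents is also not exhaustive: $d_i^*$ need not be another $d_j$, so even the ``swap'' and ``nested'' patterns require justification via a normal form you only cite.)

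The missing idea is that membership in $\operatorname{var}^\sharp(A)$ is a condition on identities, not on subalgebras, and the paper exploits this through a purely numerical argument that bypasses any analysis of how $*$ acts on idempotents or matrix units. Since $FC_{2,*}$ and $M_{\rho,\tau}^{1}$ carry the trivial grading, their exclusion from $\mathcal{V}$ lets one invoke the Giambruno--Mishchenko classification for algebras with involution \cite[Theorem~4.7]{GM}, which forces the sequence of $*$-codimensions of $\mathcal{V}$ to be polynomially bounded. On the other hand, if $n\geq 2$, then $M_n(F^{\sigma}H)$ contains $M_2(F)$ as an ordinary subalgebra, and Proposition~\ref{desigualdades} gives
\[
c_n(M_2(F))\le c_n(A)\le c_n^*(A)\le c_n^*(\mathcal{V}),
\]
which is a contradiction because $c_n(M_2(F))$ grows exponentially. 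Hence $n=1$, and the rest of the proof proceeds as in your $n=1$ case. So the case you flag as the main obstacle is resolved not by the structure theory of homogeneous involutions on $F^{\sigma}H\otimes M_n(F)$, but by combining the known $*$-case classification with the codimension inequalities; without this (or a completed version of your reduction), the lemma is not proved.
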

\begin{proof}
By Proposition~\ref{wedderburnmalcev}.(2) and Lemma~\ref{simplesgraded}, we have either 
\[
A \cong M_n(F^{\sigma}H)
\quad \text{or} \quad
A \cong \big(M_n(F^{\sigma}H) \oplus (M_n(F^{\sigma}H))^{\operatorname{op}}, \mathrm{ex}\big),
\]
where \(H\) is a subgroup of \(G\) and \(\sigma\) is a \(2\)-cocycle of \(H\).

In the latter case, the algebra \(D = F \oplus F\) endowed with the exchange involution and trivial grading is a \((G,*)\)-subalgebra of \(A\), which is isomorphic to \(FC_{2,*}\) as a \((G,*)\)-algebra. Hence, \(FC_{2,*} \in \mathcal{V}\), a contradiction. Therefore, we must have \(A \cong M_n(F^{\sigma}H)\) for some \(n \geq 1\). 

Assume that $n\ge2$. Since \(FC_{2,*}, M_{\rho,\tau}^{1} \notin \mathcal{V}\), it follows from \cite[Theorem~4.7]{GM} that \(\mathcal{V}\) has polynomial growth of the sequence of \(*\)-codimensions. As an ordinary algebra, $M_n(F^\sigma H)$ is a semissimple algebra. Since $n>1$, it contains a copy of $M_2(F)$. Hence, from Proposition \ref{desigualdades},
$$
c_n(\mathrm{M}_2({F}))\le c_n(A)\le c_n^*(A)\le c_n^*(\mathcal{V}),
$$
a contradiction. Thus, $n=1$ and $A\cong F^\sigma H$.

We now notice that if $H$ is non-trivial then, by Lemma \ref{subalgebraofA}, $A$ contains a $(G,*)$-subalgebra isomorphic to $FC_{p,\tau}^{h}$ or $FC_{2,*}^h$ (in case $|h|=2$), for some $h\in H$ of prime order $p$ generating $C_p$, $\tau(h)\in \{h,h^{-1}\}$. This contradiction shows that $H$ must be $\{1\}$ and so $A\cong F$.

\end{proof}

\begin{lemma} \label{mgtaug}
    Let $A$ be a finite-dimensional $(G,*)$-algebra, where $*$ is a $\tau$-involution. Consider $A=A_1\oplus \cdots \oplus A_n+J(A)$ a Wedderburn-Malcev decomposition of $A$ as a $(G,*)$-algebra.  If there exist $i, k\in \{1,\ldots, n\}$ with $i\neq k$ and $g\in G$ such that $A_iJ(A)_gA_k\neq \{0\}$ then $M_{\rho,\tau}^{g}\in \textnormal{var}^\sharp (A)$.
\end{lemma}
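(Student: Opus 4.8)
The plan is to realize $M_{\rho,\tau}^{g}$ as a homomorphic image of a suitable $(G,*)$-subalgebra of $A$, so that membership in $\textnormal{var}^\sharp(A)$ follows because both subalgebras and quotients only enlarge the ideal of $(G,*)$-identities. By Proposition~\ref{wedderburnmalcev}, each $A_j$ is $(G,*)$-simple, hence unital; I write $e_j$ for its unit and note that $e_j\in (A_j)_1$, that $e_j^*=e_j$ (the unit of $A_j=A_j^*$ is $*$-fixed), and that the $e_j$ are pairwise orthogonal idempotents. The first step is a reduction: for $a\in A_i$, $v\in J(A)_g$, $b\in A_k$ one has $a=e_iae_i$, $b=e_kbe_k$, so $avb=e_ia\,(e_ive_k)\,be_k$; hence the hypothesis $A_iJ(A)_gA_k\ne\{0\}$ produces a nonzero homogeneous element $w:=e_ive_k\in e_iJ(A)_ge_k$ satisfying $e_iwe_k=w$.

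Applying the involution gives $w^*=e_kw^*e_i\in e_kJ(A)_{\tau(g)}e_i$, again nonzero. Since $e_ie_k=e_ke_i=0$, one reads off $w^2=(w^*)^2=0$, $e_iwe_k=w$ and $e_kw^*e_i=w^*$, while $e_i,e_k$ are orthogonal symmetric idempotents of degree $1$. I then let $B$ be the $(G,*)$-subalgebra of $A$ generated by $e_i,e_k,w,w^*$ (this generating set is homogeneous and closed under $*$). The point where the involutive situation genuinely differs from the purely graded one is that these four elements need \emph{not} span a subalgebra: the products $ww^*\in e_iJ(A)e_i$ and $w^*w\in e_kJ(A)e_k$ are in general nonzero radical elements, so $B$ may be strictly larger than four-dimensional. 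This is the obstruction that has to be overcome.

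To discard these unwanted products I would pass to $\overline{B}=B/J(B)^2$, which still lies in $\textnormal{var}^\sharp(A)$ because $J(B)=B\cap J(A)$ (the semisimple quotient of $B$ being $F\overline{e_i}\oplus F\overline{e_k}$) is a $(G,*)$-ideal, and hence so is $J(B)^2$. In $\overline{B}$ every product of two radical elements dies, so $\overline{B}$ is spanned by $\overline{e_i},\overline{e_k},\overline{w},\overline{w^*}$. The crux of the proof is to verify that $\overline{w}\ne 0$, i.e. that $w\notin J(B)^2$. Here I would use that, since $w^2=(w^*)^2=0$, every element of $J(B)^2$ lying in the corner $e_iAe_k$ is a linear combination of the words $(ww^*)^mw=p^mw$ with $m\ge 1$, where $p:=ww^*$ is nilpotent; thus $w\in J(B)^2$ would force $w=qw$ for some nilpotent $q$ in the commutative algebra generated by $p$, and iterating $w=q^Nw=0$ yields a contradiction. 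The same argument gives $\overline{w^*}\ne 0$, and as $\overline{w},\overline{w^*}$ occupy the complementary corners $\overline{e_i}(\,\cdot\,)\overline{e_k}$ and $\overline{e_k}(\,\cdot\,)\overline{e_i}$ they are independent of each other and of the idempotents. Hence $\overline{B}=F\overline{e_i}\oplus F\overline{e_k}\oplus F\overline{w}\oplus F\overline{w^*}$ is four-dimensional, and the assignment $\overline{e_i}\mapsto e_{11}+e_{44}$, $\overline{e_k}\mapsto e_{22}+e_{33}$, $\overline{w}\mapsto e_{12}$, $\overline{w^*}\mapsto e_{34}$ respects products, grading and involution, giving a $(G,*)$-isomorphism $\overline{B}\cong M_{\rho,\tau}^{g}$. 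Therefore $M_{\rho,\tau}^{g}\in\textnormal{var}^\sharp(A)$. The main obstacle is precisely the claim $\overline{w}\ne 0$: one must control the radical products $ww^*$ and $w^*w$ so that collapsing $J(B)^2$ does not also annihilate the connecting element $w$.
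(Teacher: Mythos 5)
Your proof is correct, and it follows the same overall strategy as the paper --- extract the connecting element $w=e_ive_k\neq 0$ together with $w^*$ and the two symmetric orthogonal idempotents, kill the cross products $ww^*$, $w^*w$ in a quotient, and map the resulting four-dimensional algebra onto $M_{\rho,\tau}^{g}$ --- but you execute the crucial step by a genuinely different mechanism. The paper performs the quotient \emph{first}: it passes to $A'=A/J^{m+1}$, where $m$ is the largest integer with $e_1Je_2\subseteq J^m$, so that the cross products die because they lie in $J^{2m}\subseteq J^{m+1}$, while survival of $\overline{e_1j_ge_2}$ is guaranteed by the maximality of $m$ (with $j_g$ chosen so that $e_1j_ge_2\notin J^{m+1}$); it then takes the subalgebra $R$ generated by the four images. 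You instead take the subalgebra $B\subseteq A$ generated by $e_i,e_k,w,w^*$ first and quotient by $J(B)^2$, which makes the vanishing of cross products automatic but shifts the entire burden onto proving $w\notin J(B)^2$; your corner argument --- that $e_iJ(B)^2e_k$ is spanned by the alternating words $(ww^*)^mw$ with $m\geq 1$, so $w\in J(B)^2$ would give $w=qw$ with $q$ nilpotent, hence $w=q^Nw=0$ --- closes this gap correctly. The trade-off: the paper's choice of the exponent $m$ makes non-vanishing essentially free but is stated somewhat tersely (one must pick $j_g$ compatibly with $m$), whereas your version is more self-contained and makes the non-degeneracy verification fully explicit, at the cost of analyzing the word structure of the generated subalgebra.
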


\begin{proof}
    Assume that $A_i J(A)_g A_k = \{0\}$ for some $i \neq k$.  Denote by $e_1$ and $e_2$ the unity elements of $A_i$ and $A_k$, respectively, and take $j_g \in J(A)_g$ such that $e_1 j_g e_2 \neq 0$. 

Let $m$ be the largest integer such that $e_1 J e_2 \subseteq J^m$ and consider the quotient $(G,*)$-algebra $A' = A / J^{m+1}$.

Define $R$ the $(G,*)$-subalgebra of $A'$ generated by $\overline{e_1}, \ \overline{e_2}, \ \overline{e_1 j_g e_2} \ \text{and} \ \overline{e_2 j_g^* e_1}.$ Since $e_1$ and $e_2$ are orthogonal idempotents and $$\overline{e_1 j_g e_2} \cdot \overline{e_2 j_g^* e_1} = \overline{e_2 j_g^* e_1} \cdot  \overline{e_1 j_g e_2} = \overline{0},$$ it follows that $R$ is linearly generated by the elements 
$\overline{e_1}$, $\overline{e_2}$, $\overline{e_1 j_g e_2}$ and $\overline{e_2 j_g^* e_1}$.

Consider the map $\varphi : R \longrightarrow M_{\rho,\tau}^{g}$ defined by
\[
\overline{e_1} \mapsto e_{11} + e_{22}, \quad 
\overline{e_2} \mapsto e_{22} + e_{33}, \quad 
\overline{e_1 j_g e_2} \mapsto e_{12}, \quad 
\overline{e_2 j_g^* e_1} \mapsto e_{34}.
\]
It is straightforward to verify that $\varphi$ defines an isomorphism of $(G,*)$-algebras. Therefore, $M_{\rho,\tau}^{g}\in \textnormal{var}^\sharp (A)$.
\end{proof}

Before we present the next lemma, we first notice that for each polynomial $${f}(x_{1,1},x_{1,1}^*,
\ldots , x_{n,1},x_{n,1}^*,  \ldots , x_{1,g_k},x_{1,g_k}^*,\ldots , x_{n,g_k},x_{n,g_k}^* )\in P_n^\sharp $$ we can associate a multilinear map ${f}': A^n\rightarrow  A$ given by
$${f}'(a_1, \ldots, a_n)= f(a_{1,1},a_{1,1}^*, \ldots, a_{n,1}, a_{n,1}^*, \ldots, a_{1,g_k},a_{1,g_k}^*,\ldots , a_{n,g_k}, a_{n,g_k}^*),$$ where $a_1= \sum_{i=1}^k a_{1,g_i}, \ldots, a_n= \sum_{i=1}^k a_{n,g_i}$. 

Note that ${f}'=0$ if and only if $f$ is a $(G,*)$-identity of $A$. Therefore, we can embed $P_n^\sharp (A)$ into the space $\mathcal{L}_n(A,A)$ of multilinear maps from $A^n$ to $A$ via the map $\psi(f)=f'$ defined above.

\begin{lemma} \label{polynomiallybounded}
     Let $A$ be a finite-dimensional $(G,*)$-algebra over a field $F$ of characteristic zero, where $*$ is a $\tau$-involution. Consider $A=A_1\oplus \cdots \oplus A_m+J(A)$ a Wedderburn-Malcev decomposition of $A$ as a $(G,*)$-algebra. If $A_i\cong F$ and $A_iJ(A)A_k=\{0\}$, for all $i,k\in\{1,\ldots, m\}$, $i\neq k$, then $A$ has polynomial growth of the sequence of $(G,*)$-codimensions.

\end{lemma}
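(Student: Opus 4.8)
The plan is to bound the codimension $c_n^\sharp(A)=\dim P_n^\sharp(A)$ directly, by exhibiting an injective linear map from $P_n^\sharp(A)$ into a space of evaluations whose dimension is polynomial in $n$. Write $S=A_1\oplus\cdots\oplus A_m$ for the semisimple part and let $e_i$ denote the unit of $A_i$, so that the $e_i$ are orthogonal idempotents. Since each $A_i\cong F$ \emph{as a $(G,*)$-algebra}, we have $A_i=Fe_i$, the involution is trivial on $A_i$ (so $e_i^*=e_i$), and each $A_i$ lies in the identity component; thus $S$ is commutative, entirely symmetric, and $S\subseteq A_1$. Fix $d$ with $J:=J(A)$ satisfying $J^d=0$. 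I will repeatedly use two elementary facts: (i) $S\subseteq A_1$ forces $A_g\subseteq J$ for every $g\neq 1$; and (ii) a product of elements, each lying in $S\cup J$, with $r$ factors from $J$ lies in $J^r$, hence vanishes once $r\geq d$.

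First I would reduce the number of variables forced into the radical. Evaluations are computed on the homogeneous basis $\{e_1,\dots,e_m\}\cup\mathcal{B}_J$, where $\mathcal{B}_J$ is a homogeneous basis of $J$. By (i), a variable of homogeneous degree $\neq 1$ (a \emph{heavy} variable) can only be evaluated in $J$, so by (ii) any multilinear monomial containing at least $d$ heavy variables is a $(G,*)$-identity; modulo $\operatorname{Id}^\sharp(A)$ we may discard these. Next, and this is the crucial point, I pass to \emph{symmetric} and \emph{skew} light variables: replacing the pair $x_{k,1},x_{k,1}^*$ by $s_k=x_{k,1}+x_{k,1}^*$ and $z_k=x_{k,1}-x_{k,1}^*$ is an invertible change of generators since $\operatorname{char}F=0$. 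Because $S$ is symmetric, every skew element of $A_1$ lies in $J$; hence $z_k$, like every heavy variable, is always evaluated in $J$. Consequently, by (ii) again, every surviving monomial has at most $d-1$ variables among the heavy ones and the $z_k$'s, all other variables being symmetric-light $s_k$'s.

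It remains to control the $s_k$, which may be evaluated either at some $e_i$ or in $J$. Here I would prove a \emph{coherence} lemma: using $e_ie_k=\delta_{ik}e_i$ together with $e_iJe_k\subseteq A_iJA_k=\{0\}$ for $i\neq k$ (the hypothesis), one checks that any product of basis elements in which two distinct idempotents occur must vanish, so a basis evaluation is nonzero only if all its idempotent factors equal a single $e_i$. Combining this with (ii), every $f\in P_n^\sharp$ vanishes automatically on any basis substitution that either uses at least $d$ radical basis elements or uses two different idempotents. Therefore $f\equiv 0$ if and only if $f$ vanishes on the \emph{good} substitutions, those using a single idempotent $e_i$ and at most $d-1$ radical basis elements; this gives an injection of $P_n^\sharp(A)$ into $\bigoplus_{\mathrm{good}}A$.

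Finally I would count. Decompose $P_n^\sharp$ according to the type assigned to each variable index, namely symmetric-light, skew-light, or heavy of a prescribed degree and sign. Any nonzero sector has at most $d-1$ non-symmetric-light variables, so there are only $O(n^{d-1})$ admissible sectors. Within a fixed sector, a good substitution is determined by the choice of $i$, of the at most $d-1$ variables evaluated in $J$, and of the corresponding radical basis elements; there are $O(n^{d-1})$ of these, so the injection above bounds that sector's contribution by $(\dim A)\,O(n^{d-1})$. Summing over sectors yields $c_n^\sharp(A)=O(n^{2(d-1)})$, i.e.\ polynomial growth. The main obstacle is precisely the step that at first sight threatens an exponential $2^n$ blow-up from the free choice of sign on each of the $\sim n$ light variables; the resolution is the symmetric–skew substitution together with $e_i^*=e_i$ (equivalently, the absence of skew semisimple elements), which confines all sign-sensitive behaviour to the radical and is exactly the structural feature separating this situation from the exponential-growth algebra $FC_{2,*}$.
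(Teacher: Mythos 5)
Your proposal is correct, and its two vanishing mechanisms are exactly those underlying the paper's proof: a product with at least $d$ factors in $J$ dies by nilpotency (the paper writes $s$ for your $d$), and a product of basis elements containing two distinct idempotents dies by orthogonality together with the hypothesis $A_iJ(A)A_k=\{0\}$ --- your ``coherence lemma'' is precisely the verification that the paper leaves implicit when asserting that $P_n^\sharp(A)$ embeds into its space $\mathrm{U}$. Where you genuinely diverge is in the bookkeeping of degrees and stars. The paper never decomposes $P_n^\sharp$ into sectors: it uses the map $f\mapsto f'$ defined just before the lemma, embedding $P_n^\sharp(A)$ into the space $\mathcal{L}_n(A,A)$ of multilinear maps on whole-algebra tuples $(a_1,\ldots,a_n)\in A^n$; since the homogeneous components $(a_k)_g$ and their stars are extracted \emph{inside} the evaluation, each variable index consumes a single basis element of $A$ (an $e_i$ or a radical element), so the $|G|^n$ degree choices and the $2^n$ sign choices that drive your whole design simply never materialize. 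The image lands in the subspace $\mathrm{U}$ of maps vanishing on bad tuples, giving in one stroke $c_n^\sharp(A)\le \dim_F(A)\cdot m\sum_{k=0}^{s-1}\binom{n}{k}(\dim_F J)^k=O(n^{s-1})$. Your route pays for the finer decomposition by (degree, symmetry) type with two extra devices --- discarding sectors with many heavy variables, and the symmetric/skew change of variables forcing the $z_k$ into $J$ --- and arrives at the slightly weaker, but still polynomial, bound $O(n^{2(d-1)})$; note that your final summation only requires the inequality $c_n^\sharp(A)\le\sum_{\mathrm{sectors}}\dim$, which holds because the sectors span $P_n^\sharp$ (the quotient need not split as a direct sum over star-refined sectors, but you never need that). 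What your version buys is conceptual: it makes explicit that the only threats to polynomial growth are semisimple elements that are skew or of non-identity degree --- exactly the feature present in $FC_{2,*}$, $FC_{2,*}^s$ and $FC_{p,\tau}^h$ --- and it spells out the coherence argument that the paper compresses into a single sentence.
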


\begin{proof}

Assume that $s$ is the nilpotency index of the Jacobson radical of $A$ and consider $\mathcal{B}$ a basis of $A$ given by the union of basis of the simple components of $A$ and a basis of the Jacobson radical. 

Let $\mathcal{T}$ be the set of all $n$-tuples $(a_1, \ldots, a_n)$ of elements of $\mathcal{B}$ such that either $a_{i_1},\ldots, a_{i_t}\in J$, for some $t\geq s$ and distinct elements $i_{1},\ldots ,i_t\in \{1, \ldots, n\}$, or $a_{i_1}\in A_k$ and $a_{i_2}\in A_\ell$ for some $k\neq\ell$. Define
$$\mathrm{U}=\{f\in \mathcal{L}_n(A,A)\mid f(a_1, \ldots, a_n)=0\mbox{ if }(a_1, \ldots, a_n)\in \mathcal{T}\}\subseteq \mathcal{L}_n(A,A).$$

Since $A_iJA_k=\{0\}$, for all $i,j\in\{1,\ldots, m\}$, $i\neq k$, and $s$ is the smallest integers such that $J^s=\{0\}$, by the previous discussion $P_n^\sharp(A)$ can embed into $\mathrm{U}$. In order to give an upper bound for $c_n^\sharp(A)=\dim_F P_n^\sharp(A)$, we compute the dimension of the space $\mathrm{U}$.  A combinatorial argument proves that $$c_n^\sharp(A)\leq \dim_F\mathrm{U}=  \displaystyle \dim_F (A) \cdot m\sum_{k=0}^{s-1} \binom{n}{k}(\dim_F (J))^k.$$

Therefore, $c_n^\sharp(A)$ is polynomially bounded and the proof is complete.
\end{proof}

Before presenting the main result, we recall the definition of the set $\mathcal{I}$ introduced earlier. 
This set consists of the following $(G,*)$-algebras:
\[
\mathcal{I}= \{\, FC_{2,*},\, M_{\rho,\tau}^{g},\, FC_{p,\tau}^{h}
\mid g,h\in G,\; p \text{ prime dividing } |G|,\; |h|=p,\; 
\tau(h)\in \{h,h^{-1}\} \,\}.
\]
Moreover, if $|G|$ is even, we also include in $\mathcal{I}$ the $(G,*)$-algebras 
$FC_{2,*}^s$, for all $s\in G$ such that $|s|=2$ and $\tau(s)=s$.

\begin{theorem} \label{main_thm}
		Let $\mathcal{V}$ be a $(G,*)$-variety generated by a finite-dimensional $(G,*)$-algebra $A$, where $*$ is a $\tau$-involution on $A$. Then $\mathcal{V}$ has polynomial growth if and only if each $Q\notin \textnormal{var}^\sharp(A)$, for all $Q\in \mathcal{I}$.
        
        	\end{theorem}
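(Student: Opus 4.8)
The plan is to prove the two directions separately, exploiting the machinery assembled in the preceding lemmas. The statement is a biconditional: $\mathcal{V}$ has polynomial growth if and only if no algebra from the list $\mathcal{I}$ lies in $\mathcal{V}$.

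First I would handle the easy direction, namely that polynomial growth forces the exclusion of every $Q\in\mathcal{I}$. This is essentially the contrapositive of Remark~\ref{exponentialgrowth}. If some $Q\in\mathcal{I}$ were in $\mathcal{V}=\textnormal{var}^\sharp(A)$, then $\textnormal{var}^\sharp(Q)\subseteq\mathcal{V}$, so $\textnormal{Id}^\sharp(A)\subseteq\textnormal{Id}^\sharp(Q)$ and hence $c_n^\sharp(Q)\le c_n^\sharp(A)$. But Remark~\ref{exponentialgrowth} asserts that each algebra in $\mathcal{I}$ has exponential growth of its $(G,*)$-codimension sequence, which would force $c_n^\sharp(A)$ to grow exponentially, contradicting polynomial growth of $\mathcal{V}$. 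This direction is just monotonicity of codimensions under inclusion of varieties together with the cited exponential-growth facts.

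For the converse, I assume $Q\notin\mathcal{V}$ for all $Q\in\mathcal{I}$ and aim to show polynomial growth. By Remark~\ref{obs} I may assume $F$ is algebraically closed. Fix a Wedderburn--Malcev decomposition $A=A_1\oplus\cdots\oplus A_m+J(A)$ as a $(G,*)$-algebra, which exists by Proposition~\ref{wedderburnmalcev}.(3), with each $A_i$ a $(G,*)$-simple algebra. Since none of the algebras in $\mathcal{I}$ belongs to $\mathcal{V}$, and every subalgebra of $A$ generates a subvariety of $\mathcal{V}$, Lemma~\ref{starsimples} applies: each $(G,*)$-simple component $A_i$ must be isomorphic to $F$. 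The remaining obstruction to applying Lemma~\ref{polynomiallybounded} is the condition $A_iJ(A)A_k=\{0\}$ for all $i\ne k$. Here I would argue by contradiction: if $A_iJ(A)_gA_k\ne\{0\}$ for some $i\ne k$ and some $g\in G$, then Lemma~\ref{mgtaug} yields $M_{\rho,\tau}^{g}\in\textnormal{var}^\sharp(A)=\mathcal{V}$, contradicting $M_{\rho,\tau}^{g}\notin\mathcal{V}$. Hence $A_iJ(A)A_k=\{0\}$ for all $i\ne k$.

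With both hypotheses of Lemma~\ref{polynomiallybounded} verified --- each $A_i\cong F$ and $A_iJ(A)A_k=\{0\}$ for $i\ne k$ --- that lemma immediately gives the polynomial bound on $c_n^\sharp(A)$, completing the converse. The main conceptual obstacle is ensuring that excluding precisely the finite list $\mathcal{I}$ is enough to force the simple components down to copies of $F$; this is exactly what Lemma~\ref{starsimples} delivers, and its proof is where the structural work (distinguishing the $B\oplus B^{*}$ case that produces $FC_{2,*}$, ruling out $M_n$ with $n\ge 2$ via the $M_2(F)$ subalgebra and the classical $*$-codimension result of \cite{GM}, and invoking Lemma~\ref{subalgebraofA} to handle nontrivial $H$) has already been done. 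Thus the proof of the theorem itself is mostly an assembly of the prior lemmas, with Lemma~\ref{mgtaug} and Lemma~\ref{polynomiallybounded} carrying the radical-interaction analysis and Lemma~\ref{starsimples} carrying the semisimple analysis.
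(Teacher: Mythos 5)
Your proposal is correct and follows essentially the same route as the paper: Remark~\ref{exponentialgrowth} for the forward direction, then Lemma~\ref{starsimples} to reduce the simple components to $F$, the contrapositive of Lemma~\ref{mgtaug} to kill the cross terms $A_iJ(A)A_k$, and Lemma~\ref{polynomiallybounded} to conclude. The only (harmless) additions are your explicit appeal to Remark~\ref{obs} for the algebraically closed reduction and the spelled-out monotonicity of codimensions, both of which the paper leaves implicit.
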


\begin{proof}

Recall that, by Remark \ref{exponentialgrowth}, each algebra \(B \in \mathcal{I}\) generates a variety with exponential growth of the sequence of \((G,*)\)-codimensions. Hence, if \(\mathcal{V}\) has polynomial growth, 
we necessarily have \(B \notin \mathcal{V}\) for all \(B \in \mathcal{I}\).

Conversely, assume that \(B \notin \mathcal{V}\) for all \(B \in \mathcal{I}\). Let  
\[
A = B_1 \oplus \cdots \oplus B_m + J(A)
\]
be the Wedderburn-Malcev decomposition of $A$.

According to Lemma~\ref{starsimples}, we may assume that \(B_i \cong F\) for all \(i\). 
Moreover, since \(M_{\rho, \tau }^{g} \notin \textnormal{var}^\sharp(A)\) for all \(g \in G\), 
Lemma~\ref{mgtaug} ensures that 
\[
B_i J(A) B_k = \{0\}, 
\quad \text{for all } i,k \in \{1,\ldots,m\} \text{ with } i \neq k.
\]
Hence, Lemma~\ref{polynomiallybounded} implies that \(A\) has polynomial growth 
of the sequence of \((G,*)\)-codimensions.
\end{proof}

As a consequence of Proposition \ref{exponentiallubounded} and the previous theorem, we obtain the following result.

\begin{corollary}
   Let $\mathcal{V}$ be a $(G,*)$-variety generated by a finite-dimensional $(G,*)$-algebra $A$, where $*$ is a $\tau$-involution on $A$. Then the sequence $c_n^\sharp(A)$, $n \geq 1$, is either polynomially bounded or grows exponentially.\qed
\end{corollary}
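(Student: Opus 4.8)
The plan is to prove Theorem~\ref{main_thm} by establishing the two implications separately, with the forward direction being essentially immediate and the converse requiring the structural machinery developed in the preceding lemmas. For the forward implication, suppose $\mathcal{V} = \textnormal{var}^\sharp(A)$ has polynomial growth. By Remark~\ref{exponentialgrowth}, each algebra $Q \in \mathcal{I}$ generates a $(G,*)$-variety of exponential growth. Since any subvariety of a variety of polynomial growth must itself have polynomial (hence sub-exponential) growth, no $Q \in \mathcal{I}$ can lie in $\mathcal{V}$; otherwise $\textnormal{var}^\sharp(Q) \subseteq \mathcal{V}$ would force exponential growth inside a polynomially bounded variety. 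Thus $Q \notin \textnormal{var}^\sharp(A)$ for every $Q \in \mathcal{I}$.

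The substantive direction is the converse. Assuming $Q \notin \mathcal{V}$ for all $Q \in \mathcal{I}$, I would fix a Wedderburn--Malcev decomposition $A = B_1 \oplus \cdots \oplus B_m + J(A)$ as a $(G,*)$-algebra, which exists by Proposition~\ref{wedderburnmalcev}. The first step is to control the semisimple part: since $\mathcal{I}$ contains (in particular) $FC_{2,*}$, $M_{\rho,\tau}^{g}$, and the algebras $FC_{p,\tau}^{h}$, Lemma~\ref{starsimples} applies and forces every $(G,*)$-simple summand $B_i$ to be isomorphic to $F$. The second step is to control the bimodule interaction between distinct simple components across the radical: since $M_{\rho,\tau}^{g} \notin \mathcal{V}$ for every $g \in G$, the contrapositive of Lemma~\ref{mgtaug} yields $B_i J(A) B_k = \{0\}$ for all $i \neq k$. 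These two conditions are precisely the hypotheses of Lemma~\ref{polynomiallybounded}, so invoking it concludes that $A$ has polynomial growth of its sequence of $(G,*)$-codimensions, completing the proof.

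The main obstacle is not in the assembly above, which is a short syllogism chaining the three lemmas, but rather in ensuring that the hypotheses of each lemma are genuinely met by the members of $\mathcal{I}$ we have excluded. In particular, one must verify that Lemma~\ref{starsimples} indeed requires only the exclusion of algebras already present in $\mathcal{I}$ (the degree-inverting and degree-preserving cyclic algebras $FC_{p,\tau}^{h}$ and $FC_{2,*}^h$ produced inside nontrivial twisted group algebras $F^\sigma H$ via Lemma~\ref{subalgebraofA}, together with $FC_{2,*}$ and $M_{\rho,\tau}^{1}$), and that the reference to \cite[Theorem~4.7]{GM} handling the matrix case $n \geq 2$ is compatible with our standing assumptions that $F$ is algebraically closed and $\textnormal{char}\,F = 0$. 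The delicate point is bookkeeping: confirming that the list $\mathcal{I}$ is exactly large enough that excluding it triggers all three lemmas simultaneously, with no gap requiring an algebra outside $\mathcal{I}$.

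Once these compatibility checks are in place, the corollary follows immediately: Proposition~\ref{exponentiallubounded} guarantees that $c_n^\sharp(A)$ is exponentially bounded, and the theorem shows that either all of $\mathcal{I}$ is excluded from $\mathcal{V}$, in which case the growth is polynomial, or some $Q \in \mathcal{I}$ belongs to $\mathcal{V}$, in which case $\textnormal{var}^\sharp(Q) \subseteq \mathcal{V}$ and Remark~\ref{exponentialgrowth} forces exponential growth. Hence no intermediate growth is possible.
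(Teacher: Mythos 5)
Your proposal is correct and takes essentially the same route as the paper: the paper derives this corollary in one line from Proposition~\ref{exponentiallubounded} together with Theorem~\ref{main_thm}, which is precisely the dichotomy argument in your final paragraph (exclusion of all of $\mathcal{I}$ gives polynomial growth; inclusion of some $Q\in\mathcal{I}$ gives an exponential lower bound via Remark~\ref{exponentialgrowth}, matched above by the exponential upper bound). The bulk of your write-up re-proves Theorem~\ref{main_thm} itself, and that argument also coincides with the paper's own proof of the theorem (Remark~\ref{exponentialgrowth} for the forward direction; Lemmas~\ref{starsimples}, \ref{mgtaug} and \ref{polynomiallybounded} for the converse).
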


Recall that two varieties $\mathcal{V}$ and $\mathcal{W}$ are said to be not comparable whenever neither is contained in the other, i.e., $\mathcal{V}\not\subseteq \mathcal{W}$ and $\mathcal{W}\not\subseteq \mathcal{V}$. Let us define $$\mathcal{V}_*=\textnormal{var}^\sharp(FC_{2,*}),\quad \mathcal{V}_{2,*}^s=\textnormal{var}^\sharp(FC_{2,*}^{s}), \quad \mathcal{V}_\tau^{g}=\textnormal{var}^\sharp (M_{\rho,\tau}^{g}), \quad \mathcal{V}_{p,\tau}^{h}=\textnormal{var}^\sharp(FC_{p,\tau}^{h}),$$ for all $g,s\in G$ with $|s|=2$ and $\tau(s)=s$ (in case $|G|$ is even) and for all prime number $p\mid |G|$ and $h\in G$ with $|h|=p$ such that $\tau(h)\in \{h,h^{-1}\}$.

Under this terminology, we have the following result.

\begin{lemma}\label{lema}
    The varieties $\mathcal{V}_*,\, \mathcal{V}_{2,*}^s,\, \mathcal{V}^{g}_\tau$ and $ \mathcal{V}^{h}_{p,\tau }$, are not comparable, for any $g,s\in G$ with $|s|=2$ and $\tau(s)=s$ (in case $|G|$ is even) and any prime number $p\mid |G|$ and $h\in G$ with $|h|=p$ such that $\tau(h)\in \{ h,h^{-1}\}$.
\end{lemma}

\begin{proof}
    The supports of $M_{\rho,\tau}^g$ and $M_{\rho,\tau}^h$ are equal if and only if 
$g \in \{h, \tau(h)\}$; equivalently, $M_{\rho,\tau}^g \cong M_{\rho,\tau}^h$. 
Thus, if $M_{\rho,\tau}^g \not\cong M_{\rho,\tau}^h$, then the corresponding 
$(G,*)$-varieties of identities are not comparable.

The variety $\mathcal{V}_\tau^g$ does not satisfy $[x_{1,1},x_{2,g}]$, which is 
satisfied by all the other varieties. Let $A$ be a $(G,*)$-algebra generating any 
of the remaining varieties. If there exists 
$h \in \mathrm{supp}\,A \setminus \mathrm{supp}\,M_{\rho,\tau}^{g}$, then the 
polynomial $x_{1,h}$ is an identity of $\mathcal{V}_\tau^g$ but not of $A$. 
Otherwise, if $g \ne 1$, the polynomial $x_{1,g}^2$ is satisfied by 
$\mathcal{V}_\tau^g$ but not by $A$. If $g=1$, then 
$\mathrm{supp}\,A = \{1\}$, i.e., $A = FC_{2,*}$. In this case, 
$(x_{1,1} - x_{1,1}^*)^2$ is satisfied by $M_{\rho,\tau}^{1}$ but not by $A$.

Hence, each $\mathcal{V}_\tau^g$ is not comparable with any of the other distinct 
varieties in the list.

Now, $FC_{p,\tau}^g \cong FC_{p',\tau}^{g'}$ if and only if $p=p'$, 
$\langle g \rangle = \langle g' \rangle$, and $\tau(g)=\tau(g')$. In particular, 
if $p>2$, then $(G,*)$-isomorphism between them is equivalent to equality of 
supports. Therefore, if $p>2$, the varieties $\mathcal{V}_{p,\tau}^g$ and 
$\mathcal{V}_*$ are not comparable with any distinct variety in the list.

It remains to show that $\mathcal{V}_{2,*}^g$ and $\mathcal{V}_{2,\tau}^g$ are not 
comparable, where $|g| = 2$ and $\tau(g) = g$. This follows by considering the 
polynomials $x_{1,g} - x_{1,g}^*$ and $x_{1,g} + x_{1,g}^*$.
\end{proof}

\begin{corollary}
 Let $\mathcal{V}$ be a $(G,*)$-variety generated by a finite-dimensional $(G,*)$-algebra $A$ over a field $F$ of characteristic zero (not necessarily algebraically closed), where $*$ is a $\tau$-involution. Then $\mathcal{V}$ has almost polynomial growth if and only if $A$ is $T_G^*$-equivalent to one of the following algebras:   

\begin{enumerate}
\renewcommand{\labelenumi}{(\arabic{enumi})}
    \item $FC_{2,*}$;
    \item $M_{\rho,\tau}^{g}$, for some $g\in G$;
      \item $FC_{2,*}^s$, for some $s\in G$ with $|s|=2$ and $\tau(s)=s$ (in case $|G|$ is even);
    \item $FC_{p,\tau}^{h}$, for some prime number $p\mid |G|$ and $h\in G$ with $|h|=p$ such that $\tau(h) \in \{h,h^{-1}\}$.
\end{enumerate}

\end{corollary}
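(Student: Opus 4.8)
The plan is to deduce the statement from Theorem~\ref{main_thm} and the incomparability Lemma~\ref{lema}, after reducing to the algebraically closed case. Since Theorem~\ref{main_thm} and Lemma~\ref{lema} were obtained under the standing assumption that $F=\overline{F}$, the first step is to use Remark~\ref{obs} to pass from $F$ to its algebraic closure: the $(G,*)$-codimension sequence is preserved, $T_G^*$-equivalence to an algebra defined over the prime field is preserved, and the relevant membership relations transfer. As every algebra in $\mathcal{I}$ is defined over the prime field of $F$, this reduction is harmless, and I would carry out the remaining argument over $\overline{F}$.

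For the forward implication, assume $\mathcal{V}=\textnormal{var}^\sharp(A)$ has almost polynomial growth. In particular $\mathcal{V}$ does not have polynomial growth, so the contrapositive of Theorem~\ref{main_thm} yields some $Q\in\mathcal{I}$ with $Q\in\mathcal{V}$, and hence $\textnormal{var}^\sharp(Q)\subseteq\mathcal{V}$. By Remark~\ref{exponentialgrowth} the subvariety $\textnormal{var}^\sharp(Q)$ has exponential growth; since $\mathcal{V}$ has almost polynomial growth, no \emph{proper} subvariety can have exponential growth, forcing $\textnormal{var}^\sharp(Q)=\mathcal{V}$. Thus $A$ is $T_G^*$-equivalent to $Q$, and as $\mathcal{I}$ is exactly the union of the four families $FC_{2,*}$, $M_{\rho,\tau}^{g}$, $FC_{2,*}^s$ (when $|G|$ is even), and $FC_{p,\tau}^{h}$, this gives precisely one of the listed algebras.

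For the converse, assume $A$ is $T_G^*$-equivalent to some $Q\in\mathcal{I}$, so that $\mathcal{V}=\textnormal{var}^\sharp(Q)$ has exponential growth by Remark~\ref{exponentialgrowth}; it remains to verify that every proper subvariety $\mathcal{W}\subsetneq\mathcal{V}$ has polynomial growth. The key point is that $\mathcal{W}$ contains no member of $\mathcal{I}$: if some $Q'\in\mathcal{I}$ belonged to $\mathcal{W}$, then $\textnormal{var}^\sharp(Q')\subseteq\mathcal{W}\subseteq\textnormal{var}^\sharp(Q)$, and Lemma~\ref{lema} (together with the $(G,*)$-isomorphism criteria recorded in its proof) would force $Q'$ to be $T_G^*$-equivalent to $Q$, whence $\mathcal{W}\supseteq\textnormal{var}^\sharp(Q')=\mathcal{V}$, contradicting properness. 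Applying Theorem~\ref{main_thm} to $\mathcal{W}$ then gives polynomial growth, which is exactly the minimality required for almost polynomial growth.

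The step that requires the most care is this last application of Theorem~\ref{main_thm}, since that theorem is stated for varieties generated by a \emph{finite-dimensional} $(G,*)$-algebra, whereas an arbitrary proper subvariety $\mathcal{W}$ is not a priori of this form. I would bridge this gap by invoking the representability/Specht-type theory for $(G,*)$-algebras in characteristic zero, which ensures that a subvariety of a finitely generated $(G,*)$-variety is again generated by a finite-dimensional algebra; alternatively, one can argue directly that if $c_n^\sharp(\mathcal{W})$ fails to be polynomially bounded then the Wedderburn--Malcev analysis underlying Lemmas~\ref{mgtaug} and~\ref{polynomiallybounded} produces a copy of some $Q'\in\mathcal{I}$ inside $\mathcal{W}$, contradicting the previous paragraph. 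The remaining content is routine bookkeeping of the incomparability relations supplied by Lemma~\ref{lema}.
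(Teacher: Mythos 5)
Your proposal is correct and follows essentially the same route as the paper's own proof: the forward direction combines Theorem~\ref{main_thm} with minimality of an almost-polynomial-growth variety, and the converse uses Lemma~\ref{lema} to show that a proper subvariety $\mathcal{W}$ excludes every member of $\mathcal{I}$, then applies Theorem~\ref{main_thm} to $\mathcal{W}$. The two subtleties you treat explicitly are in fact glossed over by the paper: the reduction to $\overline{F}$ via Remark~\ref{obs} is left implicit, and the paper applies Theorem~\ref{main_thm} to $\mathcal{W}$ without addressing whether $\mathcal{W}$ is generated by a finite-dimensional $(G,*)$-algebra, which is exactly the gap you identify and propose to close via representability.
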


\begin{proof}

In fact, assume that $A$ has almost polynomial growth of the sequence of $(G,*)$-codimensions. According to Theorem~\ref{main_thm}, there exists an algebra $B \in \mathcal{I}$ such that $B \in \textnormal{var}^\sharp(A)$. Since any proper subvariety of $\mathcal{V}$ has polynomial growth, we must have $\mathcal{V} = \textnormal{var}^\sharp(B)$.

Conversely, assume that $\mathcal{V}$ is generated by some algebra from $\mathcal{I}$, and let $\mathcal{W}$ be a proper subvariety of $\mathcal{V}$. According to Lemma~\ref{lema}, we have $Q \notin \mathcal{W}$ for all $Q \in \mathcal{I}$. By Theorem~\ref{main_thm}, it follows that $\mathcal{W}$ has polynomial growth.

\end{proof}

\end{document}